\documentclass[12pt,a4paper]{article}

\marginparwidth 0pt \oddsidemargin 0pt \evensidemargin 0pt
\topmargin -1.4 cm \textheight 24.5 truecm \textwidth 16.0 truecm
\parskip 4pt
\usepackage{graphicx,tikz}
\usepackage{latexsym}
\usepackage{amsmath}
\usepackage{amssymb}
\usepackage{arydshln}

\usepackage{cite}
\usepackage[comma]{natbib}
\setcitestyle{numbers,square}
\usepackage{stmaryrd}
\usepackage{enumerate}

\usepackage{hyperref}

\usepackage{color}
\usepackage{lineno}
\usepackage{graphicx}
\usepackage{ae}
\usepackage{amsmath}
\usepackage{amssymb}
\usepackage{latexsym}
\usepackage{url}
\usepackage{epsfig}
\usepackage{mathrsfs}
\usepackage{amsfonts}
\usepackage{amsthm}
\usepackage{bm}




\newcommand{\Cay}{\mathrm{Cay}}

\newcommand{\Spec}{\mathrm{Spec}}

\newtheorem{theorem}{Theorem}[section]

\newtheorem{lemma}[theorem]{Lemma}
\newtheorem{remark}{Remark}
\newtheorem{cor}[theorem]{Corollary}

\theoremstyle{definition}

\newtheorem{problem}{Problem}


\numberwithin{equation}{section} 
\allowdisplaybreaks    


\def\qed{\hfill$\Box$\vspace{12pt}}



\long\def\delete#1{}



\usepackage{xcolor}
\usepackage[normalem]{ulem}

\usepackage{stfloats}

\begin{document}
\title{Integral Cayley graphs over a group of order $6n$}
\author{Jing Wang$^{a,b,c}$,~Xiaogang Liu$^{a,b,c}$$^,$\thanks{Supported by the National Natural Science Foundation of China (No. 12371358) and the Guangdong Basic and Applied Basic Research Foundation (No. 2023A1515010986).}~, ~Ligong Wang$^{a,b,c}$$^,$\thanks{Supported by the National Natural Science Foundation of China (No. 12271439).}~$^,$\thanks{Corresponding author. Email addresses: wj66@mail.nwpu.edu.cn, lgwangmath@163.com, xiaogliu@nwpu.edu.cn}
\\[2mm]
{\small $^a$School of Mathematics and Statistics,}\\[-0.8ex]
{\small Northwestern Polytechnical University, Xi'an, Shaanxi 710072, P.R.~China}\\
{\small $^b$Research \& Development Institute of Northwestern Polytechnical University in Shenzhen,}\\[-0.8ex]
{\small Shenzhen, Guandong 518063, P.R. China}\\
{\small $^c$Xi'an-Budapest Joint Research Center for Combinatorics,}\\[-0.8ex]
{\small Northwestern Polytechnical University, Xi'an, Shaanxi 710129, P.R. China}\\
}
\date{}

\date{}

\openup 0.5\jot
\maketitle
\begin{abstract}
In this paper, we study the integral Cayley graphs over a non-abelian group $U_{6n}=\langle a,b\mid a^{2n}=b^3=1, a^{-1}ba=b^{-1}\rangle$ of order $6n$. We give a necessary and sufficient condition for the integrality of Cayley graphs over $U_{6n}$. We also study relationships between the integrality of Cayley graphs over $U_{6n}$ and the Boolean algebra of cyclic groups. As applications, we construct some infinite families of connected integral Cayley graphs over $U_{6n}$.

\emph{Keywords:} Integral Cayley graph; Non-abelian group; Boolean algebra

\emph{Mathematics Subject Classification (2010):} 05C50, 05C25
\end{abstract}

\section{Introduction}
Let $\Gamma$ be a simple graph with vertex set $V(\Gamma)$ and edge set $E(\Gamma)$. The \emph{adjacency matrix} of $\Gamma$ is denoted by $A(\Gamma)=(a_{u,v})_{u,v\in V(\Gamma)}$, where $a_{u,v}=1$ if $uv\in E(\Gamma)$, and $a_{u,v}=0$ otherwise. The set of all eigenvalues of $A(\Gamma)$ is called the \emph{spectrum} $\Spec(\Gamma)$ of the graph $\Gamma$.
If all eigenvalues of $A(\Gamma)$ are integers, then $\Gamma$ is called an \emph{integral graph}. In 1974, Harary and Schwenk \cite{HararyS1974} proposed the following problem:
\begin{problem}\label{integralU6n-problem1}
Which graphs are integral?
\end{problem}
Since then, characterizing integral graphs has become an important research topic in algebraic graph theory. Although many classes of integral graphs have been characterized (see \cite{BussemakerC1976,WatanabeS1979,Wang2005,BalinskaCRSS2002}), it is still very difficult to give a complete solution to Problem 1, which is far from being settled. Recently, it has been found that integral graphs are good candidates to have perfect state transfer \cite{chris1}, which have significant applications in quantum computation theory \cite{ChristandlDDEKL2005, Dutta2023, Kay2010}.

Let $G$ be a group with the identity element $1$ and $S$ a subset of $G$ such that  $1\notin S=S^{-1}=\left\{s^{-1}\mid s\in S\right\}$ (inverse-closed). The \emph{Cayley graph} $\Gamma=\Cay(G,S)$ is a graph whose vertex set is $G$ and edge set is $\{\{g,sg\}\mid g\in G, s\in S\}$.
By definition, $\Cay(G,S)$ is a simple undirected regular graph, which is connected if and only if $S$ generates $G$. 
In 2009, Abdollahi and Vatandoost \cite{AbdollahiV2009} determined all connected cubic integral Cayley graphs and proposed the following problem:
\begin{problem}\label{integralU6n-problem1}
Which Cayley graphs are integral?
\end{problem}
Thereafter, many results on Problem 2 have been given. Klotz and Sander \cite{KlotzS2010,KlotzS2011} determined all finite abelian Cayley integral groups~(A finite group $G$ is said to be \emph{Cayley integral} if every undirected Cayley graph over $G$ is integral).
Alperin and Peterson \cite{AlperinP2012} given a necessary and sufficient condition for the integrity of Cayley graphs over abelian groups.
Lu et al. \cite{LuHH2018} studied the integrality of Cayley graphs over non-abelian groups and obtained a necessary and sufficient condition for the integrality of Cayley graphs over dihedral groups $D_n$. Subsequently, this result was extended to generalized dihedral groups by Huang and Li \cite{HuangL2021}. Some other results on integrality of Cayley graphs over non-abelian groups were also studied. Cheng et al. studied a necessary and sufficient condition for the integrality of Cayley graphs over dicyclic groups $T_{4n}$ \cite {ChengFH2019} and a certain group of order $8n$ \cite{ChengFL2022}. And then Behajaina and Legrand \cite{BehajainaL2022} extended the works on dicyclic groups to generalized dicyclic groups. For more results, the readers can refer to \cite{AbdollahiJ2014,KuLW2014,DeVosKMA2013,EstelyiK2014,Ghasemi2017,MaW2016,So2006}.

Recently, in an excellent survey on eigenvalues of Cayley graphs \cite{LiuZ2022}, Liu and Zhou proposed to study the following problem:

\begin{problem}\label{integralU6n-problem1}
Characterize integral Cayley graphs over the non-abelian group $U_{6n}$ for any $n\geq1$, where
$$U_{6n}=\langle a,b\mid a^{2n}=b^3=1, a^{-1}ba=b^{-1}\rangle.$$
\end{problem}
As far as we know, the group $U_{6n}$ is first discovered in the book \cite{JamesL01} by James and Liebeck. Recently, many properties of Cayley  graphs over $U_{6n}$ have been investigated (see \cite{AssariS2014, DarafshehY2017,DarafshehY2018}). In this paper, we address the Problem \ref{integralU6n-problem1}.

The paper is organized as follows. In Section 2, we introduce some useful lemmas. In Section 3, we first get a necessary and sufficient condition for the integrality of Cayley graphs over $U_{6n}$ by using Babai's result on spectra of Cayley graphs (see Theorem \ref{integralU6n-theorem2}). Next, we give another version of the necessary and sufficient condition  for the integrality of Cayley graphs over $U_{6n}$ which expressed by the irreducible characters of cyclic groups (see Theorem \ref{integralU6n-proposition1}). In Section 4, we study relationships between the integrality of Cayley graphs over $U_{6n}$ and the Boolean algebra of cyclic groups (see Theorems \ref{integralU6n-theorem3}, \ref{integralU6n-theorem4} and \ref{integralU6n-theorem5}). Moreover, we also determine some infinite families of connected integral Cayley graphs over $U_{6n}$ (see Corollaries \ref{integralU6n-cor4}, \ref{integralU6n-cor1}, \ref{integralU6n-cor2} and \ref{integralU6n-cor3}).

\section{Preliminaries}

In this section, we introduce some useful lemmas. For more details on representation theory, see \cite{JamesL01,Steinberg12}.

\begin{lemma}\label{integralU6n-lemma6}\emph{(\cite[Lemma 2.1]{AbdollahiV2009})}
Let $\omega=\exp(\frac{\pi \imath}{n})$, where $\imath^2=-1$. Then
\begin{itemize}
\item[\rm(1)]$\sum_{r=1}^{2n-1}\omega^r=-1$,
\item[\rm(2)] If $l$ is even, for $0<l<2n-1$, then $\sum_{r=1}^{n-1}\omega^{lr}=-1$.
\end{itemize}
\end{lemma}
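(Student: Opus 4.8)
The plan is to recognize both identities as instances of the standard geometric-series summation for roots of unity, exploiting that $\omega=\exp(\pi\imath/n)$ is a primitive $2n$-th root of unity. Indeed $\omega^{2n}=\exp(2\pi\imath)=1$ while $\omega^{k}\neq 1$ for $0<k<2n$, so $\omega$ generates the cyclic group of $2n$-th roots of unity. The only technical point to monitor is confirming, in each case, that the common ratio of the relevant geometric series is genuinely different from $1$, so that the closed form $\sum_{r=0}^{N-1}q^{r}=(q^{N}-1)/(q-1)$ applies.

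For part (1), I would start from the full sum over a complete set of $2n$-th roots of unity. Since $\omega\neq 1$ and $\omega^{2n}=1$, the geometric sum gives
\begin{equation*}
\sum_{r=0}^{2n-1}\omega^{r}=\frac{\omega^{2n}-1}{\omega-1}=0.
\end{equation*}
Subtracting the $r=0$ term then yields $\sum_{r=1}^{2n-1}\omega^{r}=0-1=-1$, as claimed.

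For part (2), writing the even exponent as $l=2m$, I would rewrite $\omega^{l}=\omega^{2m}=\exp(2\pi\imath m/n)$, so that $\omega^{l}$ is an $n$-th root of unity. The range $0<l<2n-1$ forces $1\le m\le n-1$, hence $0<m<n$, which guarantees $\omega^{l}\neq 1$. Applying the geometric-series formula with ratio $\omega^{l}$ and $n$ terms,
\begin{equation*}
\sum_{r=0}^{n-1}\omega^{lr}=\frac{(\omega^{l})^{n}-1}{\omega^{l}-1}=\frac{\exp(2\pi\imath m)-1}{\omega^{l}-1}=0,
\end{equation*}
and removing the $r=0$ term again gives $\sum_{r=1}^{n-1}\omega^{lr}=-1$.

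Both computations are elementary, so there is no substantive obstacle; the one place demanding care is the verification that $\omega^{l}\neq 1$ in part (2). This is precisely where the parity of $l$ and the bound $l<2n$ enter: evenness makes $\omega^{l}$ an $n$-th (rather than merely $2n$-th) root of unity, and the bound keeps $m$ in the range $1\le m\le n-1$ so that $\omega^{l}$ is a nontrivial such root. Were $l$ odd, one would instead have $(\omega^{l})^{n}=(-1)^{l}=-1$, and the sum over $n$ terms would not collapse to $-1$; tracking these hypotheses is therefore the crux of the argument.
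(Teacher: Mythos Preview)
Your argument is correct: both parts are immediate consequences of the geometric-series identity for roots of unity, and you have correctly isolated the one nontrivial hypothesis check, namely that $\omega^{l}\neq 1$ in part~(2). The paper itself does not prove this lemma at all---it simply quotes it from \cite{AbdollahiV2009}---so there is nothing further to compare.
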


\begin{lemma}\emph{(\cite[Theorem 3.1]{Babai1979})} \label{integralU6n-lemma1}
Let $G$ be a finite group, and $\{\chi_1,\ldots, \chi_h\}$ the set of all irreducible characters over complex field  $\mathbb{C}$ with $\chi_i(1)=d_i~(i=1,\ldots, h)$. Then the spectrum of the Cayley graph $\Cay(G, S)$ can be arranged as
\begin{equation*}
\Spec(\Cay(G, S))=\left\{[\lambda_{11}]^{d_1},\ldots, [\lambda_{1d_1}]^{d_1},\ldots,[\lambda_{h1}]^{d_h},\ldots, [\lambda_{hd_h}]^{d_h}   \right\},
\end{equation*}
where $[\lambda_{ij}]^{d_i}$ denotes that the eigenvalue $\lambda_{ij}$ has the multiplicity $d_i$ corresponding to the character $\chi_i$ for $1\leq j\leq d_i$. Furthermore, for any natural number $t$, we have
\begin{equation*}
\lambda^t_{i1}+\lambda^t_{i2}+\cdots+\lambda^t_{id_i}=\sum\limits_{s_1,\ldots,s_t\in S}\chi_i\left(\Pi_{l=1}^t s_l\right).
\end{equation*}
\end{lemma}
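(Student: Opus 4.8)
The plan is to realize the adjacency matrix $A=A(\Cay(G,S))$ as an operator coming from the regular representation of $G$ and then to diagonalize it one irreducible block at a time. Indexing rows and columns of $A$ by the elements of $G$, adjacency of $g$ and $v$ means $vg^{-1}\in S$, and a direct check shows
\[
A=\sum_{s\in S}L(s),
\]
where $L$ is the left regular representation, $L(s)e_g=e_{sg}$. So the first step is to identify $A$ with the image $L(\sigma)$ of the inverse-closed group-algebra element $\sigma=\sum_{s\in S}s$.

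Next I would decompose the regular representation. By Maschke's theorem over $\mathbb{C}$ one has $L\cong\bigoplus_{i=1}^{h}d_i\,\rho_i$, where $\rho_i$ affords $\chi_i$ and appears with multiplicity equal to its degree $d_i$. Passing to a basis adapted to this decomposition conjugates $A$ into block-diagonal form whose blocks are the $d_i\times d_i$ matrices $M_i:=\sum_{s\in S}\rho_i(s)=\rho_i(\sigma)$, each block occurring $d_i$ times. Taking the $\rho_i$ unitary and using $S=S^{-1}$ gives $\rho_i(s^{-1})=\rho_i(s)^{*}$, whence each $M_i$ is Hermitian and therefore has real eigenvalues $\lambda_{i1},\ldots,\lambda_{id_i}$. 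Collecting these over $i$, with each $\lambda_{ij}$ repeated $d_i$ times because the block $M_i$ recurs $d_i$ times, yields exactly the claimed arrangement of $\Spec(\Cay(G,S))$.

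For the power-sum identity I would use that $\rho_i$ is an algebra homomorphism, so
\[
M_i^{\,t}=\Bigl(\sum_{s\in S}\rho_i(s)\Bigr)^{t}
=\sum_{s_1,\ldots,s_t\in S}\rho_i(s_1)\cdots\rho_i(s_t)
=\sum_{s_1,\ldots,s_t\in S}\rho_i\!\left(\prod_{l=1}^{t}s_l\right),
\]
and then take traces, using $\trace\rho_i(g)=\chi_i(g)$ together with $\sum_{j=1}^{d_i}\lambda_{ij}^{\,t}=\trace(M_i^{\,t})$, to obtain
\[
\lambda_{i1}^{\,t}+\cdots+\lambda_{id_i}^{\,t}
=\sum_{s_1,\ldots,s_t\in S}\chi_i\!\left(\prod_{l=1}^{t}s_l\right).
\]

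The individual ingredients — the adjacency convention, unitarizability of the $\rho_i$, and the homomorphism property — are all routine; the one genuinely load-bearing point, and the place to be careful, is the decomposition of the regular representation with the correct multiplicities, since it is this that distributes the eigenvalues $\lambda_{ij}$ of each block $M_i$ with the precise multiplicity $d_i$.
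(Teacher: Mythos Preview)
Your argument is correct and is essentially Babai's original approach: express $A$ as the image of $\sigma=\sum_{s\in S}s$ under the left regular representation, decompose $L\cong\bigoplus_i d_i\rho_i$, and read off the block $M_i=\rho_i(\sigma)$ (appearing $d_i$ times) together with its trace-power identity. The paper does not give its own proof of this lemma; it simply quotes the result from \cite{Babai1979}, so there is nothing further to compare.
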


The following results on the group $U_{6n}$ can be verified easily.
\begin{lemma}\label{integralU6n-lemma3}
For the group $U_{6n}$, we have
\begin{itemize}
\item[\rm(1)] $ba^{2r}=a^{2r}b$, $ba^{2r+1}=a^{2r+1}b^{-1}$.
\item[\rm(2)] $(a^{2r}b)^{-1}=a^{-2r}b^2$, $(a^{2r+1}b)^{-1}=a^{-(2r+1)}b$, $(a^{2r+1}b^2)^{-1}=a^{-(2r+1)}b^2$.
\end{itemize}
\end{lemma}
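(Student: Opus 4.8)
The plan is to reduce everything to the single defining relation $a^{-1}ba = b^{-1}$ and its consequence that conjugation by a power of $a$ flips $b$ according to parity. First I would establish the general identity
\begin{equation*}
ba^m = a^m b^{(-1)^m} \qquad \text{for all } m \in \mathbb{Z},
\end{equation*}
by induction on $m$. The base case $m = 1$ is just the defining relation rewritten as $ba = ab^{-1}$ (multiply $a^{-1}ba = b^{-1}$ on the left by $a$), and for the inductive step I would use both $ba = ab^{-1}$ and its companion $b^{-1}a = ab$ (obtained by inverting $a^{-1}b^{-1}a = b$) to push the trailing letter $b^{\pm 1}$ one further factor of $a$ to the left, each time toggling the sign of its exponent. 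Negative $m$ is handled in the same way, since the relations are symmetric under $a \mapsto a^{-1}$.

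Part (1) is then immediate: setting $m = 2r$ gives $(-1)^m = 1$, so $ba^{2r} = a^{2r}b$, while $m = 2r+1$ gives $(-1)^m = -1$, so $ba^{2r+1} = a^{2r+1}b^{-1}$.

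For part (2) I would simply invert, using $(xy)^{-1} = y^{-1}x^{-1}$ together with $b^{-1} = b^2$ and $b^{-2} = b$ (as $b^3 = 1$), and then apply part (1) to commute the resulting power of $b$ past the power of $a$. For instance $(a^{2r}b)^{-1} = b^{-1}a^{-2r} = a^{-2r}b^{-1} = a^{-2r}b^2$, where the middle step uses that $a^{2r}$, hence $a^{-2r}$, commutes with $b$. The two odd cases are analogous: $(a^{2r+1}b)^{-1} = b^{-1}a^{-(2r+1)} = a^{-(2r+1)}b$ and $(a^{2r+1}b^2)^{-1} = b a^{-(2r+1)} = a^{-(2r+1)}b^2$, where the sign toggle forced by the odd exponent of $a$ is exactly what converts $b^{-1}$ into $b$ and $b$ into $b^{-1} = b^2$.

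There is no substantive obstacle here, which is why the authors remark that the lemma ``can be verified easily''; the content is entirely contained in the conjugation formula above. The only point requiring care is bookkeeping: one must consistently track the parity sign $(-1)^m$ when moving $b^{\pm 1}$ across powers of $a$, remember that inversion reverses the order of factors, and use $b^{-1} = b^2$ to present the answers in the stated normal form.
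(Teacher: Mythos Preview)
Your proposal is correct and is precisely the routine verification the paper has in mind; the paper itself supplies no proof beyond the remark that the lemma ``can be verified easily.'' Your inductive derivation of $ba^{m}=a^{m}b^{(-1)^{m}}$ from the defining relation, followed by direct inversion using $b^{-1}=b^{2}$, is exactly the expected argument.
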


For $0\leq r \leq n-1$, it is known \cite{JamesL01} that the $3n$ conjugacy classes of $U_{6n}$ are
$$\{a^{2r}\},~~~~ \{a^{2r}b, a^{2r}b^2\},~~~~ \{a^{2r+1}, a^{2r+1}b, a^{2r+1}b^2\}.$$
Note that each element in the same conjugacy class has the same character. Then all characters of $U_{6n}$ are given in the following lemma.

\begin{lemma}\emph{(\cite[ Exercise 18.4]{JamesL01})}\label{integralU6n-lemma2}
Let $\omega$ be as in Lemma \ref{integralU6n-lemma6}. The character table of $U_{6n}$ is given in Table \ref{integralU6n-table1}, where the ${\chi_j}'s$ are $2n$ linear characters for $0\leq j \leq 2n-1$ and the ${\psi_k}'s$ are $n$ irreducible characters of degree two for $0\leq k\leq n-1$.
\end{lemma}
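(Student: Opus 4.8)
The plan is to construct all $3n$ irreducible characters explicitly and then close the argument by a dimension count. The conjugacy classes listed above number $3n$ — namely the $n$ central classes $\{a^{2r}\}$, the $n$ classes $\{a^{2r}b,a^{2r}b^2\}$ of size two, and the $n$ classes $\{a^{2r+1},a^{2r+1}b,a^{2r+1}b^2\}$ of size three — so $U_{6n}$ has exactly $3n$ irreducible characters, and since $\sum_i d_i^2=|U_{6n}|=6n$, any family consisting of $2n$ linear characters together with $n$ characters of degree two must already exhaust the full list. Thus it suffices to produce such a family and check pairwise inequivalence.

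First I would locate the linear characters. Using $a^{-1}ba=b^{-1}$ from Lemma \ref{integralU6n-lemma3}, the element $a^{-1}b^{-1}ab=b^2$ lies in the commutator subgroup, hence so does $b$; since $U_{6n}/\langle b\rangle$ is the cyclic group $\langle a\rangle$ of order $2n$, in fact $[U_{6n},U_{6n}]=\langle b\rangle$ and the abelianization is $\mathbb{Z}_{2n}$. Pulling back the $2n$ characters of $\mathbb{Z}_{2n}$ yields the linear characters $\chi_j$ ($0\le j\le 2n-1$) determined by $\chi_j(a)=\omega^j$, $\chi_j(b)=1$, with $\omega$ as in Lemma \ref{integralU6n-lemma6}.

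Next I would write the degree-two representations directly rather than by induction, since explicit matrices make the character values transparent. For $0\le k\le n-1$ set
\[
\psi_k(a)=\begin{pmatrix}0&\omega^k\\ \omega^k&0\end{pmatrix},\qquad
\psi_k(b)=\begin{pmatrix}\zeta&0\\ 0&\zeta^{-1}\end{pmatrix},
\]
where $\zeta=\me^{2\pi\imath/3}$. A short check confirms $\psi_k(a)^{2n}=\psi_k(b)^3=I$ and $\psi_k(a)^{-1}\psi_k(b)\psi_k(a)=\psi_k(b)^{-1}$, so each $\psi_k$ is a genuine representation of $U_{6n}$. Since $\psi_k(b)$ has the two distinct eigenvalues $\zeta,\zeta^{-1}$ while $\psi_k(a)$ interchanges the corresponding eigenlines, the two matrices share no common invariant line, so $\psi_k$ is irreducible. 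From the single identity $\psi_k(a)^2=\omega^{2k}I$ one reads off the values $\psi_k(a^{2r})=2\omega^{2kr}$, $\psi_k(a^{2r}b)=\omega^{2kr}(\zeta+\zeta^{-1})=-\omega^{2kr}$, and $\psi_k(a^{2r+1})=\omega^{2kr}\,\mathrm{tr}\,\psi_k(a)=0$; evaluating at $a^2$ gives $2\omega^{2k}=2\me^{2\pi\imath k/n}$, which are distinct for $0\le k\le n-1$, so the $\psi_k$ are pairwise inequivalent. Together with the $\chi_j$ this produces $2n+n=3n$ pairwise inequivalent irreducible characters with $2n\cdot1+n\cdot4=6n$, so by the count above these are all of them, and the values agree with Table \ref{integralU6n-table1}.

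The only real obstacle is the bookkeeping in the degree-two block: the representation must be normalized so that $\psi_k(a)^{2n}=I$, which forces the off-diagonal entry to be $\omega^k$ (not an arbitrary root of unity), and one must verify that the defining relation $a^{-1}ba=b^{-1}$ genuinely survives at the matrix level. Once the matrices are correctly normalized, irreducibility, inequivalence, and all three families of character values drop out of $\psi_k(a)^2=\omega^{2k}I$, and the dimension count completes the proof.
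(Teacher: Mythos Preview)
The paper does not supply its own proof of this lemma; it merely cites Exercise~18.4 of James--Liebeck. Your argument is correct and self-contained: the identification of $[U_{6n},U_{6n}]=\langle b\rangle$ and hence of the $2n$ linear characters is right, your matrices for $\psi_k$ satisfy the defining relations (in particular $\psi_k(a)^2=\omega^{2k}I$ gives $\psi_k(a)^{2n}=I$ since $\omega^{2n}=1$), irreducibility follows as you say from the off-diagonal $\psi_k(a)$ permuting the eigenlines of $\psi_k(b)$, the three character values match Table~\ref{integralU6n-table1}, and the count $2n\cdot 1^2+n\cdot 2^2=6n$ finishes the classification. One cosmetic point: the relation $a^{-1}ba=b^{-1}$ is part of the presentation of $U_{6n}$ itself rather than a consequence of Lemma~\ref{integralU6n-lemma3}.
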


\begin{table}[t]
\begin{center}
\caption{Character Table of $U_{6 n}$.}\label{integralU6n-table1}
\begin{tabular}{l|lll}
\hline~~~~~~                                ~~~~& $a^{2r}$            &~~~~ $a^{2r}b$             &~~~~ $a^{2r+1}$\\[0.2mm]
\hline\\[-0.3cm]~~~~~~$\chi_j$~($0\leq j\leq2n-1$)                             ~~~~& $\omega^{2jr}$      &~~~~ $\omega^{2jr}$        &~~~~ $\omega^{j(2r+1)}$\\[0.3cm]

      ~~~~~~$\psi_k$ ($0\leq k\leq n-1$)                          ~~~~& $2\omega^{2kr}$     &~~~~ $-\omega^{2kr}$       &~~~~ $0$\\

\hline
\end{tabular}
\end{center}
\end{table}

By Lemmas \ref{integralU6n-lemma1} and \ref{integralU6n-lemma2}, we get the following result immediately.
\begin{lemma}
Let $S\subseteq U_{6n}\setminus\{1\}$ such that $S=S^{-1}$. Then
\begin{equation*}
\Spec(\Cay(U_{6n}, S))=\left\{[\lambda_{j}]^{1},[\mu_{k1}]^{2},[\mu_{k2}]^{2}\mid0\leq j\leq2n-1, 0\leq k\leq n-1 \right\},
\end{equation*}
where
\begin{equation}\label{integralU6n-equation1}
\left\{
\begin{array}{ll}
\lambda_j=\sum\limits_{s\in S}\chi_j(s), &0\leq j\leq2n-1;\\[0.4cm]
\mu_{k1}+\mu_{k2}=\sum\limits_{s\in S}\psi_k(s), &0\leq k\leq n-1;\\[0.4cm]
\mu^2_{k1}+\mu^2_{k2}=\sum\limits_{s,t\in S}\psi_k(st), &0\leq k\leq n-1.
\end{array}
\right.
\end{equation}
\end{lemma}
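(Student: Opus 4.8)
The plan is to obtain the result as a direct specialization of Babai's theorem (Lemma~\ref{integralU6n-lemma1}) to the character-theoretic data for $U_{6n}$ recorded in Lemma~\ref{integralU6n-lemma2}. By that lemma the complete list of complex irreducible characters of $U_{6n}$ comprises the $2n$ linear characters $\chi_j$ ($0\le j\le 2n-1$), each of degree $1$, together with the $n$ characters $\psi_k$ ($0\le k\le n-1$), each of degree $2$. As a consistency check, $\sum_i d_i^2=2n\cdot 1^2+n\cdot 2^2=6n=|U_{6n}|$, confirming that these are all the irreducible characters and that the multiplicities prescribed by Babai's theorem will account for all $6n$ eigenvalues.

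First I would dispose of the linear characters. Since $d_i=1$ for each $\chi_j$, Babai's theorem attaches to $\chi_j$ a single eigenvalue of multiplicity $1$, which I denote $\lambda_j$. The power-sum identity of Lemma~\ref{integralU6n-lemma1} with exponent $1$ then reads $\lambda_j=\sum_{s\in S}\chi_j(s)$, giving the first line of~\eqref{integralU6n-equation1}.

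Next I would handle the degree-two characters. Since $d_i=2$ for each $\psi_k$, Babai's theorem attaches to $\psi_k$ two eigenvalues $\mu_{k1},\mu_{k2}$, each of multiplicity $2$. Applying the power-sum identity with exponent $1$ gives $\mu_{k1}+\mu_{k2}=\sum_{s\in S}\psi_k(s)$, and with exponent $2$ gives $\mu_{k1}^2+\mu_{k2}^2=\sum_{s,t\in S}\psi_k(st)$; these are the remaining two lines of~\eqref{integralU6n-equation1}. Collecting the eigenvalues contributed by every irreducible character, with their multiplicities, yields exactly the claimed spectrum, and a final tally $2n\cdot 1+n\cdot(2+2)=6n$ matches the number of vertices of $\Cay(U_{6n},S)$.

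Because the statement is essentially a reformatting of Babai's theorem for the specific group $U_{6n}$, I do not expect a genuine obstacle. The only points demanding care are purely organizational: each of $\mu_{k1}$ and $\mu_{k2}$ must be recorded with multiplicity $2$ (the degree of $\psi_k$) rather than $1$, and one should note that the hypothesis $S=S^{-1}$ makes $A(\Cay(U_{6n},S))$ a real symmetric matrix, so that the numbers $\lambda_j,\mu_{k1},\mu_{k2}$ are legitimately the eigenvalues of a graph.
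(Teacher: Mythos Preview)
Your proposal is correct and follows exactly the paper's approach: the paper simply states that the lemma follows immediately from Lemmas~\ref{integralU6n-lemma1} and~\ref{integralU6n-lemma2}, which is precisely the specialization of Babai's theorem to the character table of $U_{6n}$ that you carry out. Your write-up merely makes explicit the bookkeeping (degrees, multiplicities, and the $t=1,2$ power-sum identities) that the paper leaves implicit.
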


The following result gives the characters of cyclic groups.
\begin{lemma}\emph{(\cite[Example 4.4.10]{Steinberg12})}\label{integralU6n-lemma5}
Let $\omega$ be as in Lemma \ref{integralU6n-lemma6}.
If $\langle a \rangle$ is a cyclic group of order $2n$, then the set of all irreducible characters is $\{\rho_j\mid 0\leq j\leq 2n-1\}$, where $\rho_j(a^r)=\omega^{jr},~ 0\leq j, r\leq 2n-1$.
Moreover, the set of all irreducible characters of the subgroup $\langle a^2 \rangle$ is $\{\rho_k\mid 0\leq k\leq n-1\}$, where $\rho_k(a^{2r})=\omega^{2kr},~0\leq k, r\leq n-1$.
\end{lemma}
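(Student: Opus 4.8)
The plan is to recall that $\langle a\rangle$ is abelian, so that every irreducible complex representation is one-dimensional and hence coincides with its own character; finding the irreducible characters therefore amounts to listing all group homomorphisms $\rho\colon\langle a\rangle\to\mathbb{C}^{\times}$. Since such a $\rho$ is a homomorphism out of a cyclic group, it is completely determined by the single value $\rho(a)$, and the defining relation $a^{2n}=1$ forces $\rho(a)^{2n}=1$, so $\rho(a)$ must be a $2n$-th root of unity.

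First I would observe that $\omega=\exp(\pi\imath/n)$ is a \emph{primitive} $2n$-th root of unity, so that the $2n$ distinct $2n$-th roots of unity are exactly $\omega^0,\omega^1,\dots,\omega^{2n-1}$. Assigning $\rho_j(a)=\omega^j$ for $0\le j\le 2n-1$ then yields $2n$ well-defined homomorphisms (the relation $\omega^{2n}=1$ is precisely what makes each $\rho_j$ respect $a^{2n}=1$), with $\rho_j(a^r)=\omega^{jr}$ by multiplicativity. These are pairwise distinct because they already differ on the generator $a$, and there can be no further irreducible characters: an abelian group of order $2n$ has exactly $2n$ conjugacy classes and hence exactly $2n$ irreducible characters. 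This establishes the first assertion.

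Next I would apply the same argument to $\langle a^2\rangle$. Since $a$ has order $2n$, the element $a^2$ has order $n$, so $\langle a^2\rangle$ is cyclic of order $n$ with generator $a^2$. A character is now determined by its value on $a^2$, which must be an $n$-th root of unity, and $\omega^2=\exp(2\pi\imath/n)$ is a primitive such root. Setting $\rho_k(a^2)=(\omega^2)^k=\omega^{2k}$ for $0\le k\le n-1$ gives $n$ distinct homomorphisms $\rho_k(a^{2r})=\omega^{2kr}$, which again exhaust the irreducible characters by the order count.

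I do not anticipate a genuine obstacle here, as this is the standard description of the dual group of a finite cyclic group. The only point requiring care is bookkeeping with the base root of unity: because $\omega$ has order $2n$ rather than $n$, one must verify that $\omega$ (and not $\omega^2$) is the correct primitive root for $\langle a\rangle$ and restrict the index $j$ to the range $0\le j\le 2n-1$ to avoid repetitions, while for the subgroup the correct primitive root is $\omega^2$ and the index $k$ runs only up to $n-1$. Keeping these two ranges aligned with the two different roots of unity is the sole subtlety.
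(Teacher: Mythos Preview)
Your argument is correct and is the standard derivation of the dual of a finite cyclic group. The paper itself does not supply a proof of this lemma; it simply cites it as \cite[Example 4.4.10]{Steinberg12}, so there is nothing to compare against beyond noting that your write-up matches the textbook treatment implicitly invoked.
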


Let $G$ be a finite group and $\mathcal{F}_G$ the set of all subgroups of $G$. The \emph{Boolean algebra} $B(G)$ is the set whose elements are obtained by arbitrarily finite intersections, unions, and complements of the elements in $\mathcal{F}_G$.
The subset $S\subseteq G$ is called an \emph{integral set} if $\chi(S)=\sum\limits_{s\in S}\chi(s)$ is an integer for each character $\chi$ of $G$. The following lemma illustrates the relationship between integral set and Boolean algebra for an abelian group.
\begin{lemma}\emph{(\cite{AlperinP2012})} \label{integralU6n-lemma4}
Let $G$ be an abelian group. Then $S\subseteq G$ is an integral set if and only if $S\in B(G)$ if and only if $\Cay(G,S)$ is integral.
\end{lemma}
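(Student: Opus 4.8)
The plan is to prove the two ``iff''s separately, treating the equivalence with integrality of $\Cay(G,S)$ as the easy one and the equivalence with $B(G)$ as the substantive one. Since $G$ is abelian, all of its irreducible characters are linear, so by Lemma \ref{integralU6n-lemma1} (with every $d_i=1$) the spectrum of $\Cay(G,S)$ is precisely $\{\chi(S)\mid \chi\in\widehat{G}\}$, where $\chi(S)=\sum_{s\in S}\chi(s)$. Hence ``$\Cay(G,S)$ is integral'' asserts exactly that every $\chi(S)$ is a rational integer, which is the definition of $S$ being an integral set. This settles the equivalence (integral set) $\Leftrightarrow$ ($\Cay(G,S)$ integral) with no further work, and reduces the lemma to showing that $S$ is an integral set if and only if $S\in B(G)$.

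For the remaining equivalence I would first obtain a Galois-theoretic description of integral sets. Let $m$ be the exponent of $G$ and $\zeta=\exp(\frac{2\pi\imath}{m})$, so that every character value $\chi(g)$ lies in the cyclotomic field $\mathbb{Q}(\zeta)$ and each $\chi(S)$ is an algebraic integer there. For $t$ coprime to $m$ let $\sigma_t\in\mathrm{Gal}(\mathbb{Q}(\zeta)/\mathbb{Q})$ be the automorphism with $\sigma_t(\zeta)=\zeta^t$, and write $S^{(t)}=\{s^t\mid s\in S\}$. Since $\chi$ is a homomorphism, $\sigma_t(\chi(s))=\chi(s)^t=\chi(s^t)$, whence $\sigma_t(\chi(S))=\chi(S^{(t)})$. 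Now a cyclotomic algebraic integer is a rational integer precisely when it is fixed by the whole Galois group; combining this with the fact that, for abelian $G$, the characters separate indicator functions (so $\chi(S^{(t)})=\chi(S)$ for all $\chi$ forces $S^{(t)}=S$), I would conclude that $S$ is an integral set if and only if $S^{(t)}=S$ for every $t$ coprime to $m$.

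Next I would identify these power-map-invariant sets combinatorially. The orbit of an element $g$ under the maps $g\mapsto g^t$ with $\gcd(t,m)=1$ is exactly the set $\mathrm{gen}(\langle g\rangle)$ of generators of the cyclic group $\langle g\rangle$: if $g$ has order $d\mid m$ then by the Chinese Remainder Theorem $t\bmod d$ runs through all residues coprime to $d$, so $\{g^t\}$ is precisely the set of generators of $\langle g\rangle$. Since every element generates a unique cyclic subgroup, the sets $\mathrm{gen}(C)$, as $C$ ranges over the cyclic subgroups of $G$, partition $G$; and a set is power-map-invariant exactly when it is a union of such blocks. (As a sanity check, taking $t=m-1$ shows that every integral set is automatically inverse-closed, consistent with the Cayley-graph setting.)

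Finally I would match the block description with $B(G)$. On one hand each block is a Boolean combination of subgroups, $\mathrm{gen}(C)=C\setminus\bigcup_{C'\subsetneq C}C'$ (union over the proper subgroups $C'$ of $C$), so every union of blocks lies in $B(G)$. On the other hand every subgroup $H$ is the disjoint union $\bigsqcup_{C\le H}\mathrm{gen}(C)$ of the blocks it contains, so the algebra of block-unions already contains $\mathcal{F}_G$ and, being closed under union, intersection and complement, contains all of $B(G)$. Hence $B(G)$ equals the collection of unions of generator-blocks, which by the previous paragraph is exactly the collection of integral sets. I expect the main obstacle to be the Galois step of the second paragraph --- in particular justifying cleanly that Galois-invariance of every $\chi(S)$ upgrades to the set equality $S^{(t)}=S$ via character separation --- while the identification with $B(G)$ is essentially bookkeeping with the partition into generator-blocks.
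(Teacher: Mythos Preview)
The paper does not prove this lemma; it is quoted from Alperin and Peterson \cite{AlperinP2012} as a known result, so there is no in-paper proof to compare against. Your argument is correct and is essentially the classical one from that reference: the first equivalence is immediate from Lemma~\ref{integralU6n-lemma1} (all $d_i=1$ for abelian $G$), and the Galois/partition argument for the second is standard. The step you flagged as delicate --- deducing $S^{(t)}=S$ from $\chi(S^{(t)})=\chi(S)$ for all $\chi$ --- is justified exactly as you suggest, via the orthogonality relations (the characters of a finite abelian group form an orthonormal basis of $\mathbb{C}[G]$, so the indicator function of a subset is determined by its character sums). There is no gap.
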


\section{The necessary and sufficient conditions for the integrality of $\Cay(U_{6n},S)$}
In this section, we study the necessary and sufficient conditions for the integrality of $\Cay(U_{6n},S)$.  Let $A,B$ be two subsets of a group $G$. For any character $\chi$ of $G$, define $$\chi(A)=\sum\limits_{a\in A}\chi(a) \text{~~and~~} \chi(AB)=\sum\limits_{a\in A, b\in B}\chi(ab).$$ In particular, $$\chi(\emptyset)=0 \text{~~and~~}
 \chi(A^2)=\sum\limits_{a_1,a_2\in A}\chi(a_1a_2).$$

\begin{theorem}\label{integralU6n-theorem2}
Let $S=S_1\cup S_2\subseteq U_{6n}\setminus\{1\}$  such that $1\notin S=S^{-1}$, where $S_1\subseteq\langle a^2\rangle\cup\langle a^2\rangle b\cup\langle a^2\rangle b^2$ and $S_2\subseteq\langle a^2\rangle a\cup\langle a^2\rangle ab\cup\langle a^2\rangle ab^2$. Then $\Cay(U_{6n},S)$ is integral if and only if the following conditions hold for $0\leq j\leq2n-1$ and $0\leq k\leq n-1$:
\begin{itemize}
\item[\rm(1)] $\chi_j(S)$, $\psi_k(S_1)$ and $\psi_k(S^2_1)+\psi_k(S^2_2)$ are integers;
\item[\rm(2)] $\Delta_{\psi_k}(S)=2\left(\psi_k(S_1^2)+\psi_k(S_2^2)\right)-\psi_k^2(S_1)$ is a perfect square.
\end{itemize}
\end{theorem}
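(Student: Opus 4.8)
The plan is to read the spectrum of $\Cay(U_{6n},S)$ off the lemma derived from Babai's theorem and then decide, character by character, when every eigenvalue is an integer. The linear characters contribute the simple eigenvalues $\lambda_j=\chi_j(S)$, while each degree-two character $\psi_k$ contributes a pair $\mu_{k1},\mu_{k2}$ (each of multiplicity two) determined by $\mu_{k1}+\mu_{k2}=\psi_k(S)$ and $\mu_{k1}^2+\mu_{k2}^2=\psi_k(S^2)$. Since the adjacency matrix is a real symmetric integer matrix, the $\mu_{ki}$ are real, and $\Cay(U_{6n},S)$ is integral precisely when all the $\lambda_j$ and all the $\mu_{ki}$ lie in $\mathbb{Z}$. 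The integrality of the $\lambda_j$ is exactly the condition $\chi_j(S)\in\mathbb{Z}$, so the whole problem reduces to analysing each pair $\mu_{k1},\mu_{k2}$.

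The key computation is to rewrite $\psi_k(S)$ and $\psi_k(S^2)$ in terms of $S_1$ and $S_2$. Reading Table \ref{integralU6n-table1}, $\psi_k$ vanishes on every element whose exponent of $a$ is odd, that is, on every element of $\langle a^2\rangle a\cup\langle a^2\rangle ab\cup\langle a^2\rangle ab^2$. As $S_2$ lies entirely in this set, $\psi_k(S_2)=0$ and hence $\psi_k(S)=\psi_k(S_1)$. For the quadratic data I would track the parity of the $a$-exponent of a product using Lemma \ref{integralU6n-lemma3}: a product of two elements both from $S_1$ or both from $S_2$ has even $a$-exponent, whereas a product of one element from $S_1$ and one from $S_2$ (in either order) has odd $a$-exponent. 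Therefore $\psi_k$ kills the mixed terms and
\[
\psi_k(S^2)=\psi_k(S_1^2)+\psi_k(S_1S_2)+\psi_k(S_2S_1)+\psi_k(S_2^2)=\psi_k(S_1^2)+\psi_k(S_2^2).
\]

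With these identities, $\mu_{k1},\mu_{k2}$ are the two real roots of $x^2-\psi_k(S_1)\,x+c_k$, where $c_k=\tfrac12\bigl(\psi_k(S_1)^2-\psi_k(S_1^2)-\psi_k(S_2^2)\bigr)$, and a direct computation gives $(\mu_{k1}-\mu_{k2})^2=2\bigl(\psi_k(S_1^2)+\psi_k(S_2^2)\bigr)-\psi_k(S_1)^2=\Delta_{\psi_k}(S)$. For the forward direction, integrality of $\mu_{k1},\mu_{k2}$ forces their sum $\psi_k(S_1)$ and product $c_k$ to be integers, whence $\psi_k(S_1^2)+\psi_k(S_2^2)=\psi_k(S_1)^2-2c_k\in\mathbb{Z}$, and $\Delta_{\psi_k}(S)=(\mu_{k1}-\mu_{k2})^2$ is the square of an integer; together with $\chi_j(S)\in\mathbb{Z}$ this yields (1) and (2). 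For the converse, (1) makes the coefficients of the quadratic integers, and writing $\Delta_{\psi_k}(S)=m^2$ with $m\in\mathbb{Z}_{\geq0}$ gives $\mu_{k1},\mu_{k2}=\tfrac12\bigl(\psi_k(S_1)\pm m\bigr)$.

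The step I expect to be most delicate is verifying that this last expression is genuinely an integer rather than a half-integer, since no explicit parity condition is imposed in the statement. This will follow from an automatic congruence: reducing the identity $m^2=2\bigl(\psi_k(S_1^2)+\psi_k(S_2^2)\bigr)-\psi_k(S_1)^2$ modulo $2$ gives $m^2\equiv\psi_k(S_1)^2\pmod 2$, and since $x^2\equiv x\pmod 2$ for every integer $x$, this forces $m\equiv\psi_k(S_1)\pmod 2$. Hence $\psi_k(S_1)\pm m$ is even and both $\mu_{ki}$ lie in $\mathbb{Z}$, completing the converse. The remaining work is the bookkeeping of the second paragraph, namely a careful verification via Lemma \ref{integralU6n-lemma3} that multiplication respects the even/odd splitting of the $a$-exponent and that $\psi_k$ annihilates the odd part, which is what makes the displayed reformulations of $\psi_k(S)$ and $\psi_k(S^2)$ legitimate.
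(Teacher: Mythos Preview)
Your argument is correct and follows the same route as the paper: both use Babai's lemma to identify $\lambda_j=\chi_j(S)$ and the pair $\mu_{k1},\mu_{k2}$ via their power sums, observe from the character table that $\psi_k$ vanishes on the odd-$a$-exponent coset so that $\psi_k(S)=\psi_k(S_1)$ and $\psi_k(S^2)=\psi_k(S_1^2)+\psi_k(S_2^2)$, and then analyse the quadratic satisfied by $\mu_{k1},\mu_{k2}$.

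The only difference is in the last step of the converse. The paper concludes by noting that conditions (1) and (2) make $\mu_{k1},\mu_{k2}$ rational, and since eigenvalues of an integer matrix are algebraic integers, rational forces integer. You instead give an explicit parity check: from $m^2\equiv\psi_k(S_1)^2\pmod 2$ and $x^2\equiv x\pmod 2$ you deduce $m\equiv\psi_k(S_1)\pmod 2$, so $\tfrac12(\psi_k(S_1)\pm m)\in\mathbb{Z}$. Both are valid; the paper's algebraic-integer argument is a one-liner that avoids any computation, while your parity argument is more elementary and self-contained. (One small remark: your phrase ``(1) makes the coefficients of the quadratic integers'' is not quite what you use, since $c_k$ being an integer is not immediate from (1) alone; but your actual argument only needs $\psi_k(S_1)\in\mathbb{Z}$ and $\Delta=m^2$, so this does not affect correctness.)
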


\begin{proof}
By calculation, we have $S_1S_2=\{s_1s_2\mid s_1\in S_1, s_2\in S_2\}\subseteq\langle a^2\rangle a\cup\langle a^2\rangle ab\cup\langle a^2\rangle ab^2$ and $S_2S_1=\{s_2s_1 \mid  s_2\in S_2, s_1\in S_1\}\subseteq\langle a^2\rangle a\cup\langle a^2\rangle ab\cup\langle a^2\rangle ab^2$. By Lemma \ref{integralU6n-lemma2}, $\psi_k(S_1S_2)=\psi_k(S_2S_1)=0$. Thus,
\begin{align*}
\psi_k(S)&=\sum\limits_{s_1\in S_1}\psi_k(s_1)+\sum\limits_{s_2\in S_2}\psi_k(s_2)=\psi_k(S_1),~ 0\leq k\leq n-1,\\[0.2cm]
\psi_k(S^2)&=\sum\limits_{s,t\in S}\psi_k(st)=\psi_k(S_1^2)+\psi_k(S_1S_2)+\psi_k(S_2S_1)+\psi_k(S_2^2)\\
&=\psi_k(S_1^2)+\psi_k(S_2^2),~ 0\leq k\leq n-1.
\end{align*}
So we simplify Equation (\ref{integralU6n-equation1}) as follows:
\begin{equation}\label{integralU6n-equation2}
\left\{
\begin{array}{ll}
 \lambda_j= \chi_j(S), &0\leq j\leq2n-1;\\[0.2cm]
 \mu_{k1}+\mu_{k2}=\psi_k(S_1), &0\leq k\leq n-1;\\[0.2cm]
 \mu^2_{k1}+\mu^2_{k2}=\psi_k(S_1^2)+\psi_k(S_2^2), &0\leq k\leq n-1.
\end{array}
\right.
\end{equation}
Suppose that $\Cay(U_{6n},S)$ is integral. By (\ref{integralU6n-equation2}), $\chi_j(S)$, $\psi_k(S_1)$ and $\psi_k(S_1^2)+\psi_k(S_2^2)$ must be integers, yielding (1). Note that integers $\mu_{k1}$ and $\mu_{k2}$ are roots of the following equation:
\begin{equation}\label{integralU6n-equation3}
x^2-\psi_k(S_1)x+\frac{1}{2}\left(\psi^2_k(S_1)- \left(\psi_k(S_1^2)+\psi_k(S_2^2)\right)\right)=0.
\end{equation}
Then the discriminant $\Delta_{\psi_k}(S)=2\left(\psi_k(S_1^2)+\psi_k(S_2^2)\right)-\psi_k^2(S_1)$ must be a perfect square, and then (2) holds.

Conversely, suppose that (1) and (2) hold. Then $\lambda_j$ must be an integer for all $0\leq j\leq 2n-1$. And the solutions $\mu_{k1}$ and $\mu_{k2}$ of (\ref{integralU6n-equation3}) must be rational. Since $\mu_{k1}$ and $\mu_{k2}$ are algebraic integers, they must be integers. Thus $\Cay(U_{6n},S)$ is integral.
\qed
\end{proof}

Let $S_1\subseteq\langle a^2\rangle\cup\langle a^2\rangle b\cup\langle a^2\rangle b^2$ be a subset of $U_{6n}$ such that $S_1=S_1^{-1}$. By Lemma \ref{integralU6n-lemma3}, $S_1$ can be expressed as follows:
$$S_1=\{a^{2r}\mid r\in R\}\cup \{a^{2l}b, a^{-2l}b^2 \mid l\in L\},$$
where $R,L\subseteq\{0,1,\ldots, n-1\}$ and $S_R:=\{a^{2r}\mid r\in R\}=S_R^{-1}$.

Recall Lemma \ref{integralU6n-lemma5} that the set of all irreducible characters of a cyclic group $\langle a \rangle$ of order $2n$ is $\{\rho_j\mid 0\leq j\leq 2n-1\}$, where $\rho_j(a^{r})=\omega^{jr}, ~ 0\leq j, r\leq 2n-1$. The following result gives another version of the necessary and sufficient conditions for the integrality of $\Cay(U_{6n},S)$.


\begin{theorem}\label{integralU6n-proposition1}
Let $S_1=\{a^{2r}\mid r\in R\}\cup \{a^{2l}b, a^{-2l}b^2\mid l\in L\}$, where $R,L\subseteq\{0,1,\ldots, n-1\}$, and $S_2\subseteq\langle a^2\rangle a\cup\langle a^2\rangle ab\cup\langle a^2\rangle ab^2$ such that $S=S_1\cup S_2$ satisfies $1\notin S=S^{-1}$. Set $S_R=\{a^{2r}\mid  r\in R\}$ and $S_L=\{a^{2l}\mid  l\in L\}$. If $S_L=S_L^{-1}$, then $\Cay(U_{6n},S)$ is integral if and only if the following conditions hold for $0\leq j\leq 2n-1$ and $0\leq k\leq n-1$:
\begin{itemize}
\item[\rm(1)] $3\rho_j(S_R)+\chi_j(S_2)$ and $3\rho_j(S_L)+\chi_j(S_2)$ are integers;
\item[\rm(2)] $2\psi_k(S_2^2)$ is a perfect square,
\end{itemize}
where $\{\rho_j\mid 0\leq j\leq 2n-1\}$ is the set of all irreducible characters of $\langle a \rangle$ of order $2n$.
\end{theorem}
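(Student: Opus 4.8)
The plan is to take the two conditions of Theorem~\ref{integralU6n-theorem2} as the starting point and re-express them through the cyclic characters $\rho_j$ of $\langle a\rangle$, the only genuinely nontrivial input being the elementary fact that a rational algebraic integer is an integer. First I would compute the three character sums appearing in Theorem~\ref{integralU6n-theorem2}. Using the character table (Lemma~\ref{integralU6n-lemma2}), the relation $\rho_j(a^{2r})=\omega^{2jr}$, and the hypothesis $S_L=S_L^{-1}$ (which pairs $a^{2l}b$ with its inverse $a^{-2l}b^2$ and makes the two conjugate contributions equal), a direct calculation gives
\[
\chi_j(S)=\rho_j(S_R)+2\rho_j(S_L)+\chi_j(S_2),\qquad \psi_k(S_1)=2\bigl(\rho_k(S_R)-\rho_k(S_L)\bigr),
\]
for $0\le j\le 2n-1$ and $0\le k\le n-1$; here $\rho_j(S_R)$ and $\rho_j(S_L)$ depend only on $j\bmod n$ since $S_R,S_L\subseteq\langle a^2\rangle$ and $\omega^{2n}=1$.

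The crux is the evaluation of $\psi_k(S_1^2)$. Because $a^2$ is central in $U_{6n}$ (Lemma~\ref{integralU6n-lemma3}(1)), Schur's lemma forces the degree-two representation $\Psi_k$ affording $\psi_k$ to map $a^{2}\mapsto\omega^{2k}I$; and since $\psi_k(b)=-1$ with $b^3=1$, the matrix $\Psi_k(b)$ has the two primitive cube roots of unity as eigenvalues, so $\Psi_k(b)+\Psi_k(b)^{-1}=-I$. Writing $M=\sum_{s\in S_1}\Psi_k(s)$ and again using $S_L=S_L^{-1}$ to symmetrize the $b$- and $b^2$-terms, these two facts collapse $M$ to the scalar matrix $M=(\rho_k(S_R)-\rho_k(S_L))I$. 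Hence $\psi_k(S_1^2)=\mathrm{tr}(M^2)=\tfrac12\psi_k(S_1)^2$, and substituting this into the definition of $\Delta_{\psi_k}(S)$ collapses it to the clean identity $\Delta_{\psi_k}(S)=2\psi_k(S_2^2)$. This already identifies condition (2) of Theorem~\ref{integralU6n-theorem2} with condition (2) of the present statement.

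For condition (1), I would read off the two required combinations from the identities
\[
3\rho_j(S_R)+\chi_j(S_2)=\chi_j(S)+\psi_k(S_1),\qquad 3\rho_j(S_L)+\chi_j(S_2)=\chi_j(S)-\tfrac12\psi_k(S_1)\quad(k\equiv j\bmod n).
\]
In the forward direction, Theorem~\ref{integralU6n-theorem2}(1) supplies $\chi_j(S)\in\mathbb{Z}$ and $\psi_k(S_1)=2(\rho_k(S_R)-\rho_k(S_L))\in\mathbb{Z}$; since $\rho_k(S_R)-\rho_k(S_L)$ is an algebraic integer whose double is an ordinary integer, it is rational and therefore an integer, so $\psi_k(S_1)$ is even and both combinations lie in $\mathbb{Z}$. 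Conversely, subtracting the two (assumed integral) combinations yields $3(\rho_j(S_R)-\rho_j(S_L))\in\mathbb{Z}$, and the same rationality principle forces $\rho_j(S_R)-\rho_j(S_L)\in\mathbb{Z}$; adding back recovers $\psi_k(S_1)\in\mathbb{Z}$ and $\chi_j(S)\in\mathbb{Z}$. The last clause of Theorem~\ref{integralU6n-theorem2}(1) then follows, since $\psi_k(S_1^2)=2(\rho_k(S_R)-\rho_k(S_L))^2\in\mathbb{Z}$, while condition (2) makes $2\psi_k(S_2^2)$ a perfect square, so the algebraic integer $\psi_k(S_2^2)$ is rational and hence an integer.

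\textbf{Main obstacle.} The delicate point is precisely the step from $2(\rho_j(S_R)-\rho_j(S_L))\in\mathbb{Z}$ or $3(\rho_j(S_R)-\rho_j(S_L))\in\mathbb{Z}$ to $\rho_j(S_R)-\rho_j(S_L)\in\mathbb{Z}$: a priori these are only real algebraic numbers, and the coefficients $2,3$ do not control them on their own. Everything hinges on invoking, repeatedly (for $\rho_j(S_R)-\rho_j(S_L)$ and for $\psi_k(S_2^2)$), the principle that a rational algebraic integer is an integer. Once this is in place, the scalar reduction of $M$ and the explicit character sums are routine, and the hypothesis $S_L=S_L^{-1}$ is used exactly to make the $b$- and $b^2$-contributions combine into the real quantities $\rho_k(S_L)$.
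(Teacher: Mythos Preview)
Your proposal is correct and follows the same overall architecture as the paper: reduce to Theorem~\ref{integralU6n-theorem2}, compute $\chi_j(S)$, $\psi_k(S_1)$, $\psi_k(S_1^2)$ explicitly in terms of $\rho_j(S_R)$ and $\rho_j(S_L)$, observe that $\Delta_{\psi_k}(S)=2\psi_k(S_2^2)$, and pass back and forth between the two sets of integrality conditions using the fact that a rational algebraic integer is an integer. The linear identities you write down for $3\rho_j(S_R)+\chi_j(S_2)$ and $3\rho_j(S_L)+\chi_j(S_2)$ and the two directions of the deduction match the paper's argument line by line.

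The one genuine difference is how you obtain $\psi_k(S_1^2)=\tfrac12\psi_k(S_1)^2$. The paper expands the double sum $\sum_{s,t\in S_1}\psi_k(st)$ by brute force into nine cross terms and simplifies using $S_L=S_L^{-1}$ to reindex. Your route is more conceptual: Schur's lemma gives $\Psi_k(a^{2r})=\omega^{2kr}I$, the eigenvalue analysis of $\Psi_k(b)$ (trace $-1$, order dividing $3$) gives $\Psi_k(b)+\Psi_k(b)^{-1}=-I$, and then $S_L=S_L^{-1}$ equates the coefficients of $\Psi_k(b)$ and $\Psi_k(b)^{-1}$ in $M$, forcing $M$ to be scalar. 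This buys you a one-line explanation of why the discriminant collapses (the ``even'' block $S_1$ contributes a scalar, so without $S_2$ the two eigenvalues $\mu_{k1},\mu_{k2}$ coincide), at the cost of invoking the actual representation rather than only its character. Both arguments use the hypothesis $S_L=S_L^{-1}$ in exactly the same place and to the same effect.
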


\begin{proof}
Firstly, by Lemma \ref{integralU6n-lemma2}, we have
\begin{align}\label{integralU6n-equation4}\nonumber
\psi_k(S_1)&=\sum\limits_{r\in R}\psi_k(a^{2r})+\sum\limits_{l\in L}\left(\psi_k(a^{2l}b)+\psi_k(a^{-2l}b^2)\right)\\ \nonumber
&=\sum\limits_{r\in R}\psi_k(a^{2r})+2\sum\limits_{l\in L}\psi_k(a^{2l}b)\\ \nonumber
&=2\sum\limits_{r\in R}\omega^{2kr}-2\sum\limits_{l\in L}\omega^{2kl}\\
&=2(\rho_k(S_R)-\rho_k(S_L)).
\end{align}
Recall that $ba^{2r}=a^{2r}b$ for all $0\leq r\leq n-1$ and $S_L=S_L^{-1}$. Then
\begin{align}\label{integralU6n-equation6}\nonumber
\psi_k(S_1^2)&=\sum_{r_1, r_2\in R}\psi_k(a^{2(r_1+r_2)})+2\sum_{r\in R, l\in L}\psi_k(a^{2(r+l)}b)+2\sum_{r\in R, l\in L}\psi_k(a^{2(r+l)}b^2)\\ \nonumber
&+2\sum_{l_1, l_2\in L}\psi_k(a^{2(l_1+l_2)})+\sum_{l_1, l_2\in L}\psi_k(a^{2(l_1+l_2)}b^2)+\sum_{l_1, l_2\in L}\psi_k(a^{2(l_1+l_2)}b)\\ \nonumber
&=2\sum_{r_1, r_2\in R}\omega^{2k(r_1+r_2)}-4\sum_{r\in R,l\in L}\omega^{2k(r+l)}+2\sum_{l_1,l_2\in L}\omega^{2k(l_1+l_2)}\\ \nonumber
&=2\left(\sum\limits_{r_1\in R}\omega^{2kr_1}\sum\limits_{r_2\in R}\omega^{2kr_2}-2\sum\limits_{r\in R}\omega^{2kr}\sum\limits_{l\in L}\omega^{2kl}+\sum\limits_{l_1\in L}\omega^{2kl_1}\sum\limits_{l_2\in L}\omega^{2kl_2}\right)\\ \nonumber
&=2\left(\rho_k^2(S_R)-2\rho_k(S_R)\rho_k(S_L)+\rho_k^2(S_L)\right)\\
&=2(\rho_k(S_R)-\rho_k(S_L))^2.
\end{align}
If $\Cay(U_{6n},S)$ is integral, by Theorem \ref{integralU6n-theorem2} (1) and Equation (\ref{integralU6n-equation4}), then both $\psi_k(S_1)=2(\rho_k(S_R)-\rho_k(S_L)),~ 0\leq k\leq n-1$, and
\begin{align}\label{integralU6n-equation5}\nonumber
\chi_j(S)&=\sum\limits_{r\in R}\omega^{2jr}+2\sum\limits_{l\in L}\omega^{2jl}+\chi_j(S_2)\\
&=\rho_j(S_R)+2\rho_j(S_L)+\chi_j(S_2),~ 0\leq j\leq2n-1,
\end{align}
are integers.
Since $\rho_k(S_R)-\rho_k(S_L)$ is an algebraic integer, $\rho_k(S_R)-\rho_k(S_L)$ is an integer for all $0\leq k\leq n-1$. Note that
$$\rho_{n+k}(S_R)-\rho_{n+k}(S_L)=\rho_k(S_R)-\rho_k(S_L),~ 0\leq k\leq n-1,$$
thus $\rho_j(S_R)-\rho_j(S_L)$ is an integer for all $0\leq j\leq 2n-1$. Combining with (\ref{integralU6n-equation5}), we have that $3\rho_j(S_R)+\chi_j(S_2)$ and $3\rho_j(S_L)+\chi_j(S_2)$ are integers, yielding (1).

By Theorem \ref{integralU6n-theorem2} (2) and Equations (\ref{integralU6n-equation4}) and (\ref{integralU6n-equation6}), we have
\begin{align*}
\Delta_{\psi_k}(S)&=2\left(\psi_k(S_1^2)+\psi_k(S_2^2)\right)-\psi_k^2(S_1)\\
&=2\left(2\left(\rho_k(S_R)-\rho_k(S_L)\right)^2+\psi_k(S_2^2)\right)-4(\rho_k(S_R)-\rho_k(S_L))^2\\
&=2\psi_k(S_2^2)
\end{align*}
is a perfect square for all $0\leq k\leq n-1$, and then (2) holds.

Conversely, suppose that (1) and (2) hold. Obviously, Theorem \ref{integralU6n-theorem2} (2) holds. Since $3\rho_j(S_R)+\chi_j(S_2)$ and $3\rho_j(S_L)+\chi_j(S_2)$ are integers, we have $\rho_j(S_R)-\rho_j(S_L)$ is an integer for all $0\leq j\leq 2n-1$. By (\ref{integralU6n-equation4}) and (\ref{integralU6n-equation6}), $\psi_k(S_1)$ and $\psi_k(S_1^2)$ are integers for all $0\leq k\leq n-1$. And by  (\ref{integralU6n-equation5}),
$$\chi_j(S)=3\rho_j(S_R)+\chi_j(S_2)-2\left(\rho_j(S_R)-\rho_j(S_L)\right)$$
is also an integer for all $0\leq j \leq 2n-1$. Since $2\psi_k(S_2^2)$ is a perfect square and $\psi_k(S_2^2)$ is an algebraic integer, $\psi_k(S_2^2)$ must be an integer for all $0\leq k \leq n-1$. Hence, $\psi_k(S_1^2)+\psi_k(S_2^2)$ is an integer for all $0\leq k\leq n-1$, and then Theorem \ref{integralU6n-theorem2} (1) holds. By Theorem \ref{integralU6n-theorem2}, $\Cay(U_{6n}, S)$ is integral.
\qed\end{proof}

\begin{cor}\label{integralU6n-cor4}
Let $n=2p$, where $p$ is a positive integer with $p>2$. Let $S=S_1\cup S_2$ with $S_1=\langle a^4\rangle\setminus \{1\}\cup\{a^{2l}b, a^{-2l}b^2\mid 1\leq l\leq n-1\} $ and $S_2=\{a^{2r+1}b\mid 0\leq r\leq n-1\}$.  Then $\Cay(U_{6n},S)$ is a  connected integral graph whose spectrum is $\{[-3]^{4p-4},[0]^{8n-8},[p]^6, [p-3]^2,3p-3,[-3p]^2,7p-3\}$.
\end{cor}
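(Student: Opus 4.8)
The plan is to apply Theorem \ref{integralU6n-proposition1} to establish integrality and then read the eigenvalues off Equation (\ref{integralU6n-equation2}). First I would record the combinatorial data of $S$. Here $S_R=\langle a^4\rangle\setminus\{1\}$ is the set of nonidentity powers of $a^4$ (which has order $p$), corresponding to $R=\{2m:1\le m\le p-1\}$, while the index set $L=\{1,\ldots,n-1\}$ gives $S_L=\langle a^2\rangle\setminus\{1\}$, the nonidentity powers of $a^2$ (which has order $n=2p$). Since $S_L$ is a group minus its identity, it is manifestly inverse-closed, so the hypothesis $S_L=S_L^{-1}$ of Theorem \ref{integralU6n-proposition1} is met. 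I would also check at the outset, via Lemma \ref{integralU6n-lemma3}, that $1\notin S=S^{-1}$ and that $S_1,S_2$ are disjoint (even versus odd powers of $a$), so the required decomposition is legitimate.

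Next I would evaluate the relevant character sums as geometric series in roots of unity, using Lemma \ref{integralU6n-lemma6}. Because $a^4$ has order $p$ and $a^2$ has order $n$, I expect $\rho_j(S_R)$ to equal $p-1$ when $p\mid j$ and $-1$ otherwise, $\rho_j(S_L)$ to equal $n-1$ when $n\mid j$ and $-1$ otherwise, and $\chi_j(S_2)=\sum_{r=0}^{n-1}\omega^{j(2r+1)}$ to collapse to $\pm n$ when $n\mid j$ and $0$ otherwise. As all three are rational integers for every $j$, condition (1) of Theorem \ref{integralU6n-proposition1} is immediate. For condition (2) the key computation is $\psi_k(S_2^2)$: using $b a^{2r+1}=a^{2r+1}b^{-1}$ from Lemma \ref{integralU6n-lemma3}, every product of two elements of $S_2$ lands in $\langle a^2\rangle$, so $\psi_k(S_2^2)=2\omega^{2k}\big(\sum_{r=0}^{n-1}\omega^{2kr}\big)^2$, which is $2n^2$ for $k=0$ and $0$ otherwise. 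Hence $2\psi_k(S_2^2)\in\{(2n)^2,\,0\}$ is always a perfect square, so condition (2) holds and $\Cay(U_{6n},S)$ is integral.

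It then remains to assemble the spectrum. For the linear characters I would substitute the sums above into $\lambda_j=\rho_j(S_R)+2\rho_j(S_L)+\chi_j(S_2)$ from (\ref{integralU6n-equation5}) and split into cases according to $j\bmod p$ and $j\bmod n$; this yields $\lambda_0=7p-3$, $\lambda_{2p}=3p-3$, $\lambda_p=\lambda_{3p}=p-3$, and $\lambda_j=-3$ for the remaining $4p-4$ indices. For the degree-two characters I would solve the quadratic (\ref{integralU6n-equation3}), whose roots are $\big(\rho_k(S_R)-\rho_k(S_L)\big)\pm\tfrac12\sqrt{2\psi_k(S_2^2)}$; the discriminant is nonzero only at $k=0$, producing the split pair $p$ and $-3p$, while $k=p$ gives the double root $p$ and every other $k$ gives the double root $0$. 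Tallying these (each $\mu_{kj}$ counted with multiplicity two, as in the spectrum lemma) produces the claimed list.

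Finally, for connectivity I would avoid verifying $\langle S\rangle=U_{6n}$ by hand and instead use that a $d$-regular graph is connected iff its largest eigenvalue $d=|S|$ is simple: here $|S|=(p-1)+2(n-1)+n=7p-3$ coincides with $\lambda_0$, which the computation shows is simple, so $\Cay(U_{6n},S)$ is connected. (Alternatively, $(a^2b)(ab)^{-1}=a$, whence $a,b\in\langle S\rangle$.) The only genuinely delicate part is the bookkeeping in the two case analyses — tracking which residues of $j$ and $k$ fall into each congruence class and correctly combining the doubled $\psi$-multiplicities — together with using the hypothesis $p>2$ to guarantee the listed eigenvalues are distinct; the integrality itself is essentially automatic once the character sums are seen to be integer-valued.
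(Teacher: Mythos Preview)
Your proposal is correct and follows essentially the same route as the paper: identify $S_R=\langle a^4\rangle\setminus\{1\}$ and $S_L=\langle a^2\rangle\setminus\{1\}$, evaluate $\rho_j(S_R)$, $\rho_j(S_L)$, $\chi_j(S_2)$ and $\psi_k(S_2^2)$ as geometric sums, feed these into Theorem~\ref{integralU6n-proposition1} for integrality, and then read off the spectrum from Equations~(\ref{integralU6n-equation2})--(\ref{integralU6n-equation5}). Your factored expression $\psi_k(S_2^2)=2\omega^{2k}\bigl(\sum_r\omega^{2kr}\bigr)^2$ and your connectivity argument via simplicity of the top eigenvalue (or the explicit relation $(a^2b)(ab)^{-1}=a$) are cosmetic variations on the paper's computation; the paper simply asserts that $S$ generates $U_{6n}$ and groups the product sum $\psi_k(S_2^2)$ by residue class, but the content is identical.
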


\begin{proof}
It is evident that $S=S^{-1}$ generates $U_{6n}$, so $\Cay(U_{6n},S)$ is connected. Set $S_R=\langle a^4\rangle\setminus \{1\}=\{a^{4r} \mid 1\leq r\leq p-1\}$ and $S_L=\langle a^2\rangle\setminus \{1\}=\{a^{2l} \mid 1\leq l\leq 2p-1\}$.
By Lemma \ref{integralU6n-lemma6}, we have
\begin{align*}
\rho_j(S_R)=\sum\limits_{r=1}^{p-1}\omega^{4jr}=\sum\limits_{r=1}^{p-1}\exp\left(\frac{\pi \imath}{p}2jr\right)=\left\{
\begin{array}{ll}
p-1, &\text{if}~ j=0,p,2p,3p;\\[0.2cm]
-1, &\text{otherwise},
\end{array}\right.
\end{align*}
\begin{align*}
\rho_j(S_L)=\sum\limits_{l=1}^{2p-1}\omega^{2jl}=\left\{
\begin{array}{ll}
2p-1, & j=0,2p;\\[0.2cm]
-1, & 0<j\leq4p-1,j\neq 2p,
\end{array}\right.
\end{align*}
and
\begin{align*}
\chi_j(S_2)=\sum\limits_{r=0}^{2p-1}\omega^{j(2r+1)}=\omega^j\sum\limits_{r=0}^{2p-1}\omega^{2jr}=\left\{
\begin{array}{ll}
2p,  & j=0;\\[0.2cm]
-2p, & j=2p;\\[0.2cm]
0,   &1\leq j\leq 4p-1, j\neq 2p.
\end{array}\right.
\end{align*}
Obviously, $3\rho_j(S_R)+\chi_j(S_2)$ and $3\rho_j(S_L)+\chi_j(S_2)$ are integers. And
\begin{align*}
2\psi_k(S_2^2)=4p\sum\limits_{r=0}^{2p-1}\psi_k(a^{2r})=8p\sum\limits_{r=0}^{2p-1}\omega^{2kr}=\left\{
\begin{array}{ll}
16p^2, &k=0;\\[0.2cm]
0, & 1\leq k\leq 2p-1,
\end{array}\right.
\end{align*}
is a perfect square. By Theorem \ref{integralU6n-proposition1}, $\Cay(U_{6n},S)$ is integral.

By Equations (\ref{integralU6n-equation2}), (\ref{integralU6n-equation4}), (\ref{integralU6n-equation6}) and (\ref{integralU6n-equation5}), the spectrum of $\Cay(U_{6n}, S)$ is $\{[-3]^{4p-4},$ $[0]^{8p-8},[p]^6, [p-3]^2,3p-3,[-3p]^2,7p-3\}$.
\qed\end{proof}

\begin{cor}
For each positive integer $p>2$, there is a connected $(7p-3)$-regular integral graph with $12p$ vertices.
\end{cor}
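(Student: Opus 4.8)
The plan is to specialize the construction of Corollary~\ref{integralU6n-cor4} and read off the three required properties—connectedness, integrality, and regularity of the correct degree—essentially for free. Concretely, I would take $n = 2p$ together with the same subset $S = S_1 \cup S_2$, where $S_1 = \langle a^4\rangle\setminus\{1\}\cup\{a^{2l}b, a^{-2l}b^2 \mid 1 \leq l \leq n-1\}$ and $S_2 = \{a^{2r+1}b \mid 0 \leq r \leq n-1\}$. Corollary~\ref{integralU6n-cor4} already guarantees that $\Cay(U_{6n}, S)$ is connected and integral, so the only genuinely new points to verify are the vertex count and the regularity degree.

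For the vertex count, since $|U_{6n}| = 6n$ and $n = 2p$, the graph has exactly $6\cdot 2p = 12p$ vertices, as required. For the degree, a Cayley graph $\Cay(G, S)$ is $|S|$-regular, so it suffices to compute $|S| = |S_1| + |S_2|$ (the two parts are disjoint, lying in the even- and odd-power cosets of $\langle a^2\rangle$, respectively). Here the order of $a$ is $2n = 4p$, hence $|\langle a^4\rangle| = 4p/\gcd(4,4p) = p$, so $S_R = \langle a^4\rangle\setminus\{1\}$ contributes $p-1$ elements; the inverse-paired elements $a^{2l}b, a^{-2l}b^2$ for $1 \leq l \leq n-1 = 2p-1$ contribute $2(2p-1) = 4p-2$ distinct elements (distinct $l$ give distinct $a$-powers, and the exponents $b$ versus $b^2$ separate the two families); and $S_2$ contributes $|S_2| = 2p$. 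Summing gives $|S| = (p-1) + (4p-2) + 2p = 7p-3$, so the graph is $(7p-3)$-regular.

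Alternatively—and perhaps cleaner—I would avoid the element count entirely and invoke the fact that for a connected regular graph the degree coincides with the largest adjacency eigenvalue. The spectrum supplied by Corollary~\ref{integralU6n-cor4} is $\{[-3]^{4p-4}, [0]^{8p-8}, [p]^6, [p-3]^2, 3p-3, [-3p]^2, 7p-3\}$, and a quick comparison shows that for $p > 2$ the value $7p-3$ dominates all the others (e.g.\ $7p-3 > 3p-3$ and $7p-3 > p$). Hence the regularity degree is $7p-3$ without any combinatorial bookkeeping.

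I do not anticipate a serious obstacle: the statement is an immediate consequence of Corollary~\ref{integralU6n-cor4}. The only care needed in the direct approach is the correct order of the subgroup $\langle a^4\rangle$ (namely $p$) when counting $|S|$, together with confirming the disjointness of $S_1$ and $S_2$ so that no element is double-counted; the eigenvalue argument sidesteps even this minor point.
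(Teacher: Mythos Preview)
Your proposal is correct and matches the paper's approach exactly: the paper states this corollary immediately after Corollary~\ref{integralU6n-cor4} without any separate proof, since it follows at once from the construction there together with the trivial observations $|U_{6n}|=6n=12p$ and $|S|=7p-3$ (or, equivalently, the largest eigenvalue $7p-3$ in the listed spectrum). There is nothing to add.
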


\section{Relationships between the integrality of $\Cay(U_{6n}, S)$ and the Boolean algebra of cyclic groups}
In this section, we study relationships between the integrality of $\Cay(U_{6n}, S)$ and the Boolean algebra of cyclic groups.
Based on Theorem \ref{integralU6n-proposition1}, we have the following results.
\begin{theorem}\label{integralU6n-theorem3}
Let $S_1\subseteq\langle a^2\rangle\setminus\{1\}$ and $S_2\subseteq\langle a^2\rangle a\cup\langle a^2\rangle ab\cup\langle a^2\rangle ab^2$ such that $S=S_1\cup S_2$ satisfies $1\notin S=S^{-1}$. Then $\Cay(U_{6n}, S)$ is integral if and only if the following conditions hold for $0\leq j\leq 2n-1$ and $0\leq k\leq n-1$:
\begin{itemize}
\item[\rm(1)] $S_1\in B(\langle a^2\rangle)$;
\item[\rm(2)] $\chi_j(S_2)$ is an integer;
\item[\rm(3)] $2\psi_k(S_2^2)$ is a perfect square.
\end{itemize}
\end{theorem}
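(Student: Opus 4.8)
The plan is to derive this result directly from Theorem \ref{integralU6n-proposition1} by specializing to the case in which $S_1$ has no $b$- or $b^2$-component. Writing $S_1=\{a^{2r}\mid r\in R\}\cup\{a^{2l}b, a^{-2l}b^2\mid l\in L\}$ as in that theorem, the hypothesis $S_1\subseteq\langle a^2\rangle\setminus\{1\}$ forces $L=\emptyset$, whence $S_L=\emptyset$ and $S_R=S_1$. The requirement $S_L=S_L^{-1}$ of Theorem \ref{integralU6n-proposition1} is then vacuously satisfied, and since inverses of elements of $\langle a^2\rangle$ again lie in $\langle a^2\rangle$ while (by Lemma \ref{integralU6n-lemma3}) inverses of elements of $S_2$ stay in $\langle a^2\rangle a\cup\langle a^2\rangle ab\cup\langle a^2\rangle ab^2$, the condition $S=S^{-1}$ splits as $S_1=S_1^{-1}$ and $S_2=S_2^{-1}$, so all hypotheses of Theorem \ref{integralU6n-proposition1} hold.

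With $S_L=\emptyset$ we have $\rho_j(S_L)=0$, so Theorem \ref{integralU6n-proposition1} says $\Cay(U_{6n},S)$ is integral if and only if, for all admissible $j$ and $k$, the quantities $3\rho_j(S_R)+\chi_j(S_2)$ and $\chi_j(S_2)$ are integers and $2\psi_k(S_2^2)$ is a perfect square. Conditions (2) and (3) of the present theorem are then immediate. The remaining task is to show that, given that $\chi_j(S_2)$ is an integer, the condition ``$3\rho_j(S_R)+\chi_j(S_2)\in\mathbb{Z}$ for all $j$'' is equivalent to condition (1), namely $S_1\in B(\langle a^2\rangle)$.

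For this I would argue as follows. Subtracting the integer $\chi_j(S_2)$ shows the first condition is equivalent to $3\rho_j(S_R)\in\mathbb{Z}$ for all $j$. The key point, and the one step requiring care, is that $\rho_j(S_R)=\sum_{r\in R}\omega^{2jr}$ is a sum of roots of unity, hence an algebraic integer; if $3\rho_j(S_R)$ is a rational integer then $\rho_j(S_R)$ is rational, and a rational algebraic integer is itself an integer. Thus $3\rho_j(S_R)\in\mathbb{Z}$ for all $j$ if and only if $\rho_j(S_R)\in\mathbb{Z}$ for all $j$.

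Finally I would translate ``$\rho_j(S_R)\in\mathbb{Z}$ for all $0\leq j\leq 2n-1$'' into membership in the Boolean algebra. Since $\omega^{2n}=1$, for $S_R\subseteq\langle a^2\rangle$ one has $\rho_{j+n}(S_R)=\rho_j(S_R)$, so these $2n$ values reduce to the $n$ values $\rho_k(S_R)$, $0\leq k\leq n-1$, which by Lemma \ref{integralU6n-lemma5} are precisely the values of the irreducible characters of the cyclic group $\langle a^2\rangle$ of order $n$ on $S_R=S_1$. Hence ``$\rho_j(S_R)\in\mathbb{Z}$ for all $j$'' is exactly the statement that $S_1$ is an integral set of $\langle a^2\rangle$, which by Lemma \ref{integralU6n-lemma4} is equivalent to $S_1\in B(\langle a^2\rangle)$. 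Combining the three equivalences yields the theorem. I expect the algebraic-integer divisibility step and the bookkeeping identifying the character range with the Boolean-algebra criterion to be the only genuinely substantive points; everything else is a direct specialization of Theorem \ref{integralU6n-proposition1}.
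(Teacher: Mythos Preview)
Your proposal is correct and follows essentially the same approach as the paper: specialize Theorem~\ref{integralU6n-proposition1} with $L=\emptyset$ (so $S_L=\emptyset$, $S_R=S_1$), use the algebraic-integer argument to pass between $3\rho_j(S_R)\in\mathbb{Z}$ and $\rho_j(S_R)\in\mathbb{Z}$, reduce the range of $j$ via $\rho_{j+n}(S_R)=\rho_j(S_R)$, and invoke Lemma~\ref{integralU6n-lemma4}. Your write-up is in fact somewhat more explicit than the paper's in checking the hypotheses of Theorem~\ref{integralU6n-proposition1} and in isolating exactly where condition~(2) is used to strip $\chi_j(S_2)$ off of $3\rho_j(S_R)+\chi_j(S_2)$.
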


\begin{proof}
Recall Lemma \ref{integralU6n-lemma5} that the set of all irreducible characters of $\langle a^2 \rangle$ is $\{\rho_j\mid 0\leq j\leq n-1\}$, where $\rho_j(a^{2r})=\omega^{2jr},~0\leq j, r\leq n-1$. Note that $\rho_{n+j}(S_1)=\rho_j(S_1)$ for $0\leq j\leq n-1$. Then $S_1$ is an integral set if and only if  $\rho_j(S_1)$ is an integer for all $0\leq j\leq n-1$ if and only if $\rho_j(S_1)$ is an integer for all $0\leq j \leq 2n-1$. Notice that $\rho_j(S_1)$ is an algebraic integer for $0\leq j \leq 2n-1$. Then $S_1$ is an integral set if and only if $3\rho_j(S_1)$ is an integer for all $0\leq j \leq 2n-1$. Set $S_R=S_1$ and $S_L=\emptyset$. The required result follows from Lemma \ref{integralU6n-lemma4} and Theorem \ref{integralU6n-proposition1}.
\qed\end{proof}


\begin{cor}\label{integralU6n-cor1}
Let $S=S_1\cup S_2$ with $S_1=\langle a^2\rangle\setminus\{1\}$ and $S_2=\{a^{2r+1}, a^{2r+1}b\mid 0\leq r\leq n-1\}$, where $n>1$ is an integer. Then $\Cay(U_{6n},S)$ is a connected integral graph whose spectrum is $\{-n-1, [-1]^{6n-4}, [2n-1]^2, 3n-1\}$.
\end{cor}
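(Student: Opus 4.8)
The plan is to apply Theorem \ref{integralU6n-theorem3} to establish integrality and then read off the eigenvalues from the simplified formulas (\ref{integralU6n-equation2}). First I would check that the graph is well-defined and connected. Writing $S_2=A\cup C$ with $A=\{a^{2r+1}\mid 0\le r\le n-1\}$ and $C=\{a^{2r+1}b\mid 0\le r\le n-1\}$, Lemma \ref{integralU6n-lemma3} gives $(a^{2r+1})^{-1}=a^{2(n-r-1)+1}\in A$ and $(a^{2r+1}b)^{-1}=a^{-(2r+1)}b=a^{2(n-r-1)+1}b\in C$, while $\langle a^2\rangle\setminus\{1\}$ is visibly inverse-closed; hence $S=S^{-1}$. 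Connectivity follows because $a=a^{2\cdot0+1}\in S_2$ and $b=(a^{2r+1})^{-1}(a^{2r+1}b)\in\langle S\rangle$, so $\langle S\rangle=U_{6n}$. Note also that $S_1=\langle a^2\rangle\setminus\{1\}$ and $S_2\subseteq\langle a^2\rangle a\cup\langle a^2\rangle ab$, so the hypotheses of Theorem \ref{integralU6n-theorem3} are met.

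For integrality I verify the three conditions of Theorem \ref{integralU6n-theorem3}. Condition (1) is immediate: $S_1=\langle a^2\rangle\setminus\{1\}$ is the complement in $\langle a^2\rangle$ of the trivial subgroup $\{1\}$, hence $S_1\in B(\langle a^2\rangle)$. For condition (2) I would use Lemma \ref{integralU6n-lemma2}: since $a^{2r+1}b$ lies in the same conjugacy class as $a^{2r+1}$, the character $\chi_j$ agrees on them, so $\chi_j(S_2)=2\sum_{r=0}^{n-1}\omega^{j(2r+1)}=2\omega^{j}\sum_{r=0}^{n-1}\omega^{2jr}$. A geometric-sum evaluation (in the spirit of Lemma \ref{integralU6n-lemma6}) shows $\sum_{r=0}^{n-1}\omega^{2jr}$ equals $n$ when $j\in\{0,n\}$ and $0$ otherwise, giving $\chi_0(S_2)=2n$, $\chi_n(S_2)=-2n$, and $\chi_j(S_2)=0$ elsewhere, all integers.

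For condition (3) I would expand $S_2^2$ into the four blocks $A\cdot A$, $A\cdot C$, $C\cdot A$, $C\cdot C$, using the relation $ba^{2s+1}=a^{2s+1}b^{-1}$ from Lemma \ref{integralU6n-lemma3} to locate each product: $a^{2r+1}a^{2s+1}=a^{2(r+s+1)}$, $a^{2r+1}(a^{2s+1}b)=a^{2(r+s+1)}b$, $(a^{2r+1}b)a^{2s+1}=a^{2(r+s+1)}b^2$, and $(a^{2r+1}b)(a^{2s+1}b)=a^{2(r+s+1)}$. Applying $\psi_k$ (value $2\omega^{2km}$ on $a^{2m}$ and $-\omega^{2km}$ on $a^{2m}b,a^{2m}b^2$) and summing, the signed contributions $2-1-1+2$ collapse to $\psi_k(S_2^2)=2\omega^{2k}\bigl(\sum_{r=0}^{n-1}\omega^{2kr}\bigr)^2$, which is $2n^2$ for $k=0$ and $0$ otherwise. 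Hence $2\psi_k(S_2^2)$ equals $(2n)^2$ or $0$, a perfect square, and Theorem \ref{integralU6n-theorem3} yields integrality. I expect this computation of $\psi_k(S_2^2)$ to be the main obstacle, since it requires careful multiplication in the non-abelian group, correctly identifying which coset of $\langle a^2\rangle$ each product falls into so as to pick the right value of $\psi_k$, and recognizing that the four blocks recombine into one clean factor.

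Finally, to obtain the spectrum I substitute into (\ref{integralU6n-equation2}). The linear eigenvalues are $\lambda_j=\chi_j(S_1)+\chi_j(S_2)$; since $\chi_j(S_1)=\sum_{r=1}^{n-1}\omega^{2jr}$ equals $n-1$ for $j\in\{0,n\}$ and $-1$ otherwise, I get $\lambda_0=3n-1$, $\lambda_n=-n-1$, and $\lambda_j=-1$ for the remaining $2n-2$ indices. For the degree-two characters I compute $\psi_k(S_1)=2\sum_{r=1}^{n-1}\omega^{2kr}$ and $\psi_k(S_1^2)=2\bigl(\sum_{r=1}^{n-1}\omega^{2kr}\bigr)^2$, so that $\mu_{k1},\mu_{k2}$ are the roots of (\ref{integralU6n-equation3}); for $k=0$ this quadratic is $x^2-(2n-2)x+(1-2n)=0$ with roots $2n-1,-1$, and for $1\le k\le n-1$ it is $x^2+2x+1=0$ with double root $-1$. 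Recording multiplicity $1$ for each $\lambda_j$ and multiplicity $2$ for each $\mu_{ki}$, a tally gives $3n-1$ once, $-n-1$ once, $2n-1$ with multiplicity $2$, and $-1$ with multiplicity $(2n-2)+2+4(n-1)=6n-4$, totalling $6n$ and matching the claimed spectrum $\{-n-1,[-1]^{6n-4},[2n-1]^2,3n-1\}$.
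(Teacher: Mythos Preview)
Your proposal is correct and follows essentially the same approach as the paper: both verify the three conditions of Theorem~\ref{integralU6n-theorem3} and then extract the spectrum from (\ref{integralU6n-equation2}) and (\ref{integralU6n-equation3}). The only cosmetic difference is in the evaluation of $\psi_k(S_2^2)$: the paper first counts how many ordered pairs in $S_2\times S_2$ multiply to each of $a^{2r}$, $a^{2r}b$, $a^{2r}b^2$ (obtaining $2n$, $n$, $n$ copies respectively) before applying $\psi_k$, whereas you split $S_2^2$ into the four blocks $A\cdot A$, $A\cdot C$, $C\cdot A$, $C\cdot C$ and sum the $\psi_k$-values directly; both routes yield $2\psi_k(S_2^2)=4n\sum_{r=0}^{n-1}\omega^{2kr}$, and your version is arguably more transparent. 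Your explicit verification of $S=S^{-1}$, connectivity, and the spectrum tally fills in details the paper leaves implicit.
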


\begin{proof}
It is evident that $S=S^{-1}$ generates $U_{6n}$, so $\Cay(U_{6n},S)$ is connected.  By Lemma \ref{integralU6n-lemma6}, we have
\begin{align*}
\chi_j(S_2)=2\sum\limits_{r=0}^{n-1}\omega^{j(2r+1)}=2\omega^j\sum\limits_{r=0}^{n-1}\omega^{2jr}=\left\{
\begin{array}{ll}
2n, &j=0;\\[0.2cm]
-2n, & j=n;\\[0.2cm]
0, & 1\leq j\leq 2n-1, j\neq n,
\end{array}\right.
\end{align*}
is an integer. And
\begin{align*}
2\psi_k(S_2^2)&=4n\sum\limits_{r=0}^{n-1}\psi_k(a^{2r})+2n\sum\limits_{r=0}^{n-1}\psi_k(a^{2r}b)+2n\sum\limits_{r=0}^{n-1}\psi_k(a^{2r}b^2)\\
&=4n\sum\limits_{r=0}^{n-1}\omega^{2kr}
=\left\{
\begin{array}{ll}
4n^2, &k=0,\\[0.2cm]
0, & 1\leq k\leq n-1,
\end{array}\right.
\end{align*}
is a perfect square. Note that $S_1=\langle a^2\rangle\setminus\{1\}\in B(\langle a^2\rangle)$. Then $\Cay(U_{6n},S)$ is integral by Theorem \ref{integralU6n-theorem3}.

By Equations (\ref{integralU6n-equation2}) and (\ref{integralU6n-equation3}), the spectrum of $\Cay(U_{6n}, S)$ is $\{-n-1, [-1]^{6n-4}, [2n-1]^2, 3n-1\}$.
\qed\end{proof}

\begin{cor}
For each positive integer $n>1$, there is a connected $(3n-1)$-regular integral graph with $6n$ vertices.
\end{cor}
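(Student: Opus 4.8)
The plan is to read this off as an existence consequence of the explicit construction in Corollary~\ref{integralU6n-cor1}, with no fresh eigenvalue computation required. The group $U_{6n}$ has order $6n$, so the Cayley graph $\Cay(U_{6n},S)$ built there already has $6n$ vertices; and since every Cayley graph $\Cay(G,S)$ is $|S|$-regular, the entire task reduces to checking that the connection set $S=S_1\cup S_2$ of Corollary~\ref{integralU6n-cor1} satisfies $|S|=3n-1$.

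First I would count $|S_1|$. From the presentation $a^{2n}=1$ the element $a^2$ has order $n$, so $\langle a^2\rangle$ is cyclic of order $n$ and $S_1=\langle a^2\rangle\setminus\{1\}$ has $|S_1|=n-1$. Next I would count $|S_2|=|\{a^{2r+1},a^{2r+1}b\mid 0\le r\le n-1\}|$. The $n$ odd powers $a^{2r+1}$ are pairwise distinct, the $n$ elements $a^{2r+1}b$ are pairwise distinct, and the two families cannot overlap because they lie in different cosets of $\langle a^2\rangle$ (namely $\langle a^2\rangle a$ and $\langle a^2\rangle ab$), as recorded in the conjugacy-class description preceding Lemma~\ref{integralU6n-lemma2}. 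Hence $|S_2|=2n$ and $S_1\cap S_2=\emptyset$, giving $|S|=(n-1)+2n=3n-1$, which is precisely the regularity of the graph.

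Connectedness and integrality then need not be re-established: they are exactly the conclusions of Corollary~\ref{integralU6n-cor1}, which verifies that $S=S^{-1}$ generates $U_{6n}$ and that $\Cay(U_{6n},S)$ is integral. Combining these, for every $n>1$ the graph $\Cay(U_{6n},S)$ is a connected $(3n-1)$-regular integral graph on $6n$ vertices. Because everything is inherited from the preceding corollary, there is essentially no obstacle; the only point demanding a moment's care is the disjointness-and-distinctness count for $S_2$, i.e.\ confirming that adjoining $b$ genuinely doubles the count rather than creating coincidences, which the coset structure of $U_{6n}$ settles.
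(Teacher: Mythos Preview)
Your proposal is correct and matches the paper's approach: the corollary is stated in the paper without proof, as an immediate consequence of Corollary~\ref{integralU6n-cor1}, and your argument supplies exactly the missing size count $|S|=(n-1)+2n=3n-1$ (equivalently, one could just read off the regularity from the largest eigenvalue $3n-1$ in the spectrum already listed in Corollary~\ref{integralU6n-cor1}). The only cosmetic remark is that the disjointness of the two families in $S_2$ is more naturally justified by the coset decomposition used in Theorem~\ref{integralU6n-theorem2} than by the conjugacy-class list, since $a^{2r+1}$ and $a^{2r+1}b$ actually lie in the same conjugacy class.
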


\begin{theorem}\label{integralU6n-theorem4}
Let $S_1=\{a^{2l}b, a^{-2l}b^2 \mid l\in L\}$, where $L\subseteq\{0,1,\ldots, n-1\}$, and $S_2\subseteq\langle a^2\rangle a\cup\langle a^2\rangle ab\cup\langle a^2\rangle ab^2$ such that $S=S_1\cup S_2$ satisfies $1\notin S=S^{-1}$. Set $S_L=\{a^{2l} \mid l\in L\}$. If $S_L=S_L^{-1}$, then $\Cay(U_{6n},S)$ is integral if and only if the following conditions hold for $0\leq k \leq n-1$ and $0\leq j\leq 2n-1$:
\begin{itemize}
\item[\rm(1)]$S_L\in B(\langle a^2\rangle)$;
\item[\rm(2)]$\chi_j(S_2)$ is an integer;
\item[\rm(3)]$2\psi_k(S_2^2)$ is a perfect square.
\end{itemize}
\end{theorem}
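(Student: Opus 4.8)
The plan is to realize Theorem \ref{integralU6n-theorem4} as the specialization $R=\emptyset$ of Theorem \ref{integralU6n-proposition1}, exactly mirroring how Theorem \ref{integralU6n-theorem3} handles the complementary case $L=\emptyset$. The hypothesis $S_1=\{a^{2l}b,a^{-2l}b^2\mid l\in L\}$ means that, in the notation of Theorem \ref{integralU6n-proposition1}, we take $R=\emptyset$, so $S_R=\emptyset$ and $\rho_j(S_R)=\rho_j(\emptyset)=0$ for every $j$. The hypothesis $S_L=S_L^{-1}$ is precisely what is required to invoke Theorem \ref{integralU6n-proposition1}, so that theorem applies verbatim.

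First I would substitute $\rho_j(S_R)=0$ into condition (1) of Theorem \ref{integralU6n-proposition1}. That condition, namely ``$3\rho_j(S_R)+\chi_j(S_2)$ and $3\rho_j(S_L)+\chi_j(S_2)$ are integers for all $0\le j\le 2n-1$'', collapses to: $\chi_j(S_2)$ is an integer, and $3\rho_j(S_L)+\chi_j(S_2)$ is an integer. The first of these is exactly condition (2) of the present theorem; granting it, the second is equivalent to ``$3\rho_j(S_L)$ is an integer for all $0\le j\le 2n-1$''. Condition (2) of Theorem \ref{integralU6n-proposition1}, that $2\psi_k(S_2^2)$ is a perfect square, carries over unchanged as condition (3) here.

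The one point requiring care is translating ``$3\rho_j(S_L)\in\mathbb{Z}$ for all $j$'' into the Boolean-algebra statement $S_L\in B(\langle a^2\rangle)$, and here I would reuse the mechanism from the proof of Theorem \ref{integralU6n-theorem3}. Since $\rho_j(S_L)=\sum_{l\in L}\omega^{2jl}$ depends only on $j$ modulo $n$ (because $\omega^{2nl}=1$), we have $\rho_{n+k}(S_L)=\rho_k(S_L)$ for $0\le k\le n-1$, so the order-$2n$ characters $\rho_j$ restricted to $S_L$ range over exactly the irreducible characters $\{\rho_k\mid 0\le k\le n-1\}$ of $\langle a^2\rangle$ supplied by Lemma \ref{integralU6n-lemma5}. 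Hence $S_L$ is an integral set of $\langle a^2\rangle$ if and only if $\rho_j(S_L)\in\mathbb{Z}$ for all $0\le j\le 2n-1$. Moreover each $\rho_j(S_L)$ is an algebraic integer, so $3\rho_j(S_L)\in\mathbb{Z}$ forces $\rho_j(S_L)\in\mathbb{Q}$ and therefore $\rho_j(S_L)\in\mathbb{Z}$; the factor $3$ is thus harmless and the two integrality requirements coincide. Finally, Lemma \ref{integralU6n-lemma4} identifies ``$S_L$ is an integral set'' with ``$S_L\in B(\langle a^2\rangle)$'', yielding condition (1).

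Assembling these equivalences shows that conditions (1)--(3) of Theorem \ref{integralU6n-theorem4} hold if and only if conditions (1)--(2) of Theorem \ref{integralU6n-proposition1} hold, and the latter is equivalent to integrality of $\Cay(U_{6n},S)$. I do not expect a genuine obstacle beyond bookkeeping: the only delicate step is applying the character-index reduction $\rho_{n+k}=\rho_k$ on $S_L$ together with the algebraic-integer argument in the correct direction, both of which are already set up in the proof of Theorem \ref{integralU6n-theorem3}.
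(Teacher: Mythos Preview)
Your proposal is correct and matches the paper's approach: the paper simply states that the proof is similar to that of Theorem \ref{integralU6n-theorem3} and omits the details, and your specialization $R=\emptyset$ of Theorem \ref{integralU6n-proposition1} together with the algebraic-integer/character-periodicity argument is exactly that omitted argument.
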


\begin{proof}
The proof is similar to that of Theorem \ref{integralU6n-theorem3}. Hence we omit the details here.
\qed\end{proof}

Here we would like to mention that Theorem \ref{integralU6n-theorem4} extends \cite[Theorem 1.4]{AbdollahiV2009}, in which $n>1$ is an odd integer.

\begin{cor}\label{integralU6n-cor2}
Let $S=S_1\cup S_2$ with $S_1=\{a^{2r}b,a^{2r}b^2\mid 1\leq r\leq n-1\}$ and $S_2=\{a^{2r+1}b\mid 0\leq r\leq n-1\}$, where $n>1$ is an integer.  Then $\Cay(U_{6n},S)$ is a connected integral graph whose spectrum is $\{[1-2n]^2,[-2]^{2n-2}, [1]^{4n-2}, n-2, 3n-2\}$.
\end{cor}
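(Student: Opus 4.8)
The plan is to invoke Theorem \ref{integralU6n-theorem4}, since $S_1$ consists only of elements of the form $a^{2r}b$ and $a^{2r}b^2$ with no pure $\langle a^2\rangle$ part. First I would match $S_1$ to the template of that theorem by taking $L=\{1,\ldots,n-1\}$: using $a^{-2l}b^2=a^{2(n-l)}b^2$ and the fact that $l\mapsto n-l$ permutes $\{1,\ldots,n-1\}$, the set $\{a^{2l}b,a^{-2l}b^2\mid l\in L\}$ is exactly the given $S_1$. Then $S_L=\{a^{2l}\mid l\in L\}=\langle a^2\rangle\setminus\{1\}$, which is inverse-closed and, being the complement of the trivial subgroup, lies in $B(\langle a^2\rangle)$; this is hypothesis (1) of Theorem \ref{integralU6n-theorem4}. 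At the outset I would also record that $S=S^{-1}$ (each of $S_1,S_2$ is inverse-closed by Lemma \ref{integralU6n-lemma3}) and that $S$ generates $U_{6n}$: since $n>1$ we have $a^2b\in S_1$ and $ab\in S_2$, and a short computation gives $(a^2b)(ab)^{-1}=a$, whence $b=a^{-1}(ab)$, so $\langle S\rangle=U_{6n}$ and $\Cay(U_{6n},S)$ is connected.

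The two remaining hypotheses are verified by direct character evaluations using Table \ref{integralU6n-table1}. For (2), since $a^{2r+1}b$ lies in the conjugacy class of $a^{2r+1}$, one gets $\chi_j(S_2)=\omega^{j}\sum_{r=0}^{n-1}\omega^{2jr}$, which equals $n$, $-n$, or $0$ according as $j=0$, $j=n$, or otherwise, hence an integer. For (3), the crucial observation is that products of two elements of $S_2$ collapse into $\langle a^2\rangle$: by Lemma \ref{integralU6n-lemma3} one computes $(a^{2r+1}b)(a^{2r'+1}b)=a^{2(r+r'+1)}$. Consequently $\psi_k(S_2^2)=2\omega^{2k}\bigl(\sum_{r=0}^{n-1}\omega^{2kr}\bigr)^2$, so $2\psi_k(S_2^2)$ is $4n^2$ when $k=0$ and $0$ otherwise, a perfect square in both cases. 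Theorem \ref{integralU6n-theorem4} then gives integrality.

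For the spectrum I would read the eigenvalues off Equations (\ref{integralU6n-equation2})–(\ref{integralU6n-equation6}). The degree-one eigenvalues are $\lambda_j=\chi_j(S_1)+\chi_j(S_2)$, where $\chi_j(S_1)=2\sum_{r=1}^{n-1}\omega^{2jr}$ equals $2(n-1)$ for $j\in\{0,n\}$ and $-2$ otherwise; combining with $\chi_j(S_2)$ gives $3n-2$ at $j=0$, $n-2$ at $j=n$, and $-2$ with multiplicity $2n-2$. For the degree-two characters, Equations (\ref{integralU6n-equation4}) and (\ref{integralU6n-equation6}) with $S_R=\emptyset$ yield $\psi_k(S_1)=-2\rho_k(S_L)$ and $\psi_k(S_1^2)=2\rho_k(S_L)^2$, where $\rho_k(S_L)=n-1$ for $k=0$ and $-1$ otherwise. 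Substituting into the quadratic (\ref{integralU6n-equation3}) gives $x^2+2(n-1)x+(1-2n)=0$ for $k=0$, with roots $1$ and $1-2n$, and $(x-1)^2=0$ for $1\le k\le n-1$, with double root $1$. Counting each $\mu$-eigenvalue with multiplicity two produces $1$ with multiplicity $4n-2$ and $1-2n$ with multiplicity $2$; together with the $\lambda$-values this gives $\{[1-2n]^2,[-2]^{2n-2},[1]^{4n-2},n-2,3n-2\}$, and a check that the multiplicities sum to $6n$ confirms the tally.

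All of the above is routine bookkeeping except for one genuinely load-bearing fact, namely the product identity $(a^{2r+1}b)(a^{2r'+1}b)=a^{2(r+r'+1)}$. This is what forces $S_2^2$ entirely into $\langle a^2\rangle$ and thereby makes $2\psi_k(S_2^2)$ a perfect square while simultaneously producing the clean quadratics above; I expect verifying this collapse carefully (rather than the subsequent arithmetic) to be the main point requiring attention.
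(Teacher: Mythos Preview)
Your proposal is correct and follows essentially the same route as the paper: both invoke Theorem~\ref{integralU6n-theorem4} with $S_L=\langle a^2\rangle\setminus\{1\}$, compute $\chi_j(S_2)$ and $2\psi_k(S_2^2)$ directly, and read the spectrum off Equations~(\ref{integralU6n-equation2}) and~(\ref{integralU6n-equation3}). You are simply more explicit than the paper about the product collapse $(a^{2r+1}b)(a^{2r'+1}b)=a^{2(r+r'+1)}$ and the quadratic-by-quadratic spectrum calculation, which the paper leaves implicit.
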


\begin{proof}
It is evident that $S=S^{-1}$ generates $U_{6n}$, so $\Cay(U_{6n},S)$ is connected. By Lemma \ref{integralU6n-lemma6}, we have
\begin{align*}
\chi_j(S_2)=\sum\limits_{r=0}^{n-1}\omega^{j(2r+1)}=\left\{
\begin{array}{ll}
n, & j=0;\\[0.2cm]
-n, &j=n;\\[0.2cm]
0, & 1\leq j\leq 2n-1, j\neq n,
\end{array}\right.
\end{align*}
is an integer. And
\begin{align*}
2\psi_k(S_2^2)=2n\sum\limits_{r=0}^{n-1}\psi_k(a^{2r})=
4n\sum\limits_{r=0}^{n-1}\omega^{2kr}=\left\{
\begin{array}{ll}
4n^2, & k=0,\\[0.2cm]
0, &1\leq k\leq n-1,
\end{array}\right.
\end{align*}
is a perfect square. Note that
$$S_L=S_L^{-1}=\{a^{2r}\mid 1\leq r\leq n-1\}=\langle a^2\rangle\setminus\{1\}\in B(\langle a^2\rangle).$$
Then Theorem \ref{integralU6n-theorem4} implies that $\Cay(U_{6n},S)$ is integral. By Equations (\ref{integralU6n-equation2}) and (\ref{integralU6n-equation3}), the spectrum of $\Cay(U_{6n}, S)$ is $\{[1-2n]^2, [-2]^{2n-2}, [1]^{4n-2}, n-2,  3n-2\}$.
\qed\end{proof}

\begin{cor}
For each positive integer $n>1$, there is a connected $(3n-2)$-regular integral graph with $6n$ vertices.
\end{cor}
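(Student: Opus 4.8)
The plan is to read the statement directly off the explicit family constructed in Corollary \ref{integralU6n-cor2}, so that essentially no new work is required. First I would take the graph $\Cay(U_{6n},S)$ with $S=S_1\cup S_2$ exactly as in that corollary, namely $S_1=\{a^{2r}b,a^{2r}b^2\mid 1\leq r\leq n-1\}$ and $S_2=\{a^{2r+1}b\mid 0\leq r\leq n-1\}$. Corollary \ref{integralU6n-cor2} already establishes that, for every integer $n>1$, this graph is connected and integral, so the only remaining task is to pin down its two numerical parameters: the order and the degree.

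For the order, I would recall that the vertex set of any Cayley graph $\Cay(G,S)$ is $G$ itself; hence this graph has $|U_{6n}|=6n$ vertices. For the degree, I would use that $\Cay(G,S)$ is $|S|$-regular and simply count $|S|$. The set $S_1$ contributes $2(n-1)$ elements and $S_2$ contributes $n$ elements; since every element of $S_1$ has the form $a^{2r}b^{\pm1}$ (even exponent of $a$) while every element of $S_2$ has the form $a^{2r+1}b$ (odd exponent of $a$), the two sets are disjoint. Therefore $|S|=2(n-1)+n=3n-2$, and the graph is $(3n-2)$-regular, which completes the argument.

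There is no genuine obstacle here, since the integrality and connectivity were the substantive content of Corollary \ref{integralU6n-cor2}. The only point that needs a moment of care is the disjointness of $S_1$ and $S_2$, which guarantees that the two cardinalities add without overcounting; this is immediate from the parity of the exponent of $a$. As an independent consistency check, I would note that for a connected regular graph the degree equals the largest eigenvalue, and indeed the maximum of the spectrum $\{[1-2n]^2,[-2]^{2n-2},[1]^{4n-2},n-2,3n-2\}$ is $3n-2$ for $n>1$; moreover the multiplicities sum to $2+(2n-2)+(4n-2)+1+1=6n$, confirming the order.
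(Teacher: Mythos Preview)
Your proposal is correct and is exactly the intended argument: the paper states this corollary immediately after Corollary~\ref{integralU6n-cor2} with no proof, so the implicit reasoning is precisely to observe that the Cayley graph there has $|U_{6n}|=6n$ vertices and is $|S|$-regular with $|S|=2(n-1)+n=3n-2$. Your additional consistency checks (disjointness of $S_1$ and $S_2$, largest eigenvalue, multiplicity sum) are sound and go slightly beyond what the paper records.
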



\begin{theorem}\label{integralU6n-theorem5}
Let $S_1=\{a^{2r}, a^{2r}b, a^{-2r}b^2 \mid r\in R\}$ and $S_2=\{a^{2h+1}, a^{2h+1}b, a^{2h+1}b^2\mid h\in H\}$, where $R,H\subseteq\{0,1,\ldots, n-1\}$, such that $S=S_1\cup S_2$ satisfies $1\notin S=S^{-1}$. Set $S_R=\{a^{2r}\mid r\in R\}$ and $S_H=\{a^{2h+1}\mid h\in H\}$. Then $\Cay(U_{6n}, S)$ is integral if and only if the following conditions hold:
\begin{itemize}
\item[\rm(1)]  $S_R\cup S_H\in B(\langle a\rangle)$;
\item[\rm(2)] $2\psi_k(S_2^2)$ is a perfect square for $0\leq k\leq n-1$.
\end{itemize}
 \end{theorem}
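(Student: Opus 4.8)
The plan is to deduce Theorem~\ref{integralU6n-theorem5} from Theorem~\ref{integralU6n-proposition1} by recognizing its $S_1$ as a special case of the set treated there. Indeed, writing $S_1=\{a^{2r}, a^{2r}b, a^{-2r}b^2\mid r\in R\}$ exhibits it as $\{a^{2r}\mid r\in R\}\cup\{a^{2l}b, a^{-2l}b^2\mid l\in L\}$ with $L=R$, so that the sets $S_R$ and $S_L$ of Theorem~\ref{integralU6n-proposition1} coincide. Before invoking that theorem I would verify its hypothesis $S_L=S_L^{-1}$. Using Lemma~\ref{integralU6n-lemma3} one sees that taking inverses preserves the parity of the exponent of $a$, so the assumption $1\notin S=S^{-1}$ splits as $S_1=S_1^{-1}$ and $S_2=S_2^{-1}$; within $S_1$ the $b$- and $b^2$-type elements are paired off automatically by construction (each $a^{2r}b$ with its inverse $a^{-2r}b^2$), so that $S_1=S_1^{-1}$ forces precisely $S_R=S_R^{-1}$, i.e. $S_L=S_L^{-1}$. (The same bookkeeping yields $S_H=S_H^{-1}$.) Theorem~\ref{integralU6n-proposition1} then applies, and since $S_R=S_L$ its two integrality requirements in part~(1) collapse to the single condition that $3\rho_j(S_R)+\chi_j(S_2)$ be an integer for all $0\le j\le 2n-1$, while part~(2)---that $2\psi_k(S_2^2)$ be a perfect square---is already condition~(2) of the present theorem.

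It remains to match the surviving requirement with $S_R\cup S_H\in B(\langle a\rangle)$. The key computation is $\chi_j(S_2)$: because $S_2$ is a union of full conjugacy classes $\{a^{2h+1}, a^{2h+1}b, a^{2h+1}b^2\}$, $h\in H$, and the three elements of each such class share the value $\chi_j(a^{2h+1})=\omega^{j(2h+1)}$ by Lemma~\ref{integralU6n-lemma2}, I obtain
\[
\chi_j(S_2)=3\sum_{h\in H}\omega^{j(2h+1)}=3\rho_j(S_H),
\]
where $\rho_j$ is the corresponding irreducible character of the cyclic group $\langle a\rangle$ of order $2n$ from Lemma~\ref{integralU6n-lemma5}. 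Since $S_R$ and $S_H$ are disjoint subsets of $\langle a\rangle$ (even versus odd powers of $a$), additivity of characters gives $3\rho_j(S_R)+\chi_j(S_2)=3\big(\rho_j(S_R)+\rho_j(S_H)\big)=3\rho_j(S_R\cup S_H)$.

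Finally I would convert integrality of these quantities into the Boolean-algebra statement. Each $\rho_j(S_R\cup S_H)$ is an algebraic integer, so $3\rho_j(S_R\cup S_H)\in\mathbb{Z}$ for every $j$ if and only if $\rho_j(S_R\cup S_H)\in\mathbb{Z}$ for every $j$, that is, if and only if $S_R\cup S_H$ is an integral set of $\langle a\rangle$; by Lemma~\ref{integralU6n-lemma4} applied to the abelian group $\langle a\rangle$ this is equivalent to $S_R\cup S_H\in B(\langle a\rangle)$. Chaining these equivalences proves the theorem. I expect the only genuinely delicate point to be the inverse-closedness bookkeeping via Lemma~\ref{integralU6n-lemma3} that legitimizes the application of Theorem~\ref{integralU6n-proposition1}; everything else is a direct character evaluation together with the standard rational-algebraic-integer argument.
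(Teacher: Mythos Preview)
Your proposal is correct and follows essentially the same route as the paper: compute $\chi_j(S_2)=3\rho_j(S_H)$, combine to get $3\rho_j(S_R)+\chi_j(S_2)=3\rho_j(S_R\cup S_H)$, use the rational-algebraic-integer trick, and invoke Lemma~\ref{integralU6n-lemma4} and Theorem~\ref{integralU6n-proposition1}. You are in fact more careful than the paper in explicitly checking the hypothesis $S_L=S_L^{-1}$ (via the parity-preserving inverse bookkeeping from Lemma~\ref{integralU6n-lemma3}) before applying Theorem~\ref{integralU6n-proposition1}; the paper leaves this implicit.
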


\begin{proof}
By Lemma \ref{integralU6n-lemma2}, we have
$$\chi_j(S_2)=3\sum\limits_{h\in H}\omega^{j(2h+1)},~0\leq j\leq 2n-1.$$
Thus, for $0\leq j\leq 2n-1$,
\begin{align*}
3\rho_j(S_R)+\chi_j(S_2)&=3\left(\sum\limits_{r\in R}\omega^{2jr}+\sum\limits_{h\in H}\omega^{j(2h+1)}\right)
=3\rho_j(S_R\cup S_H).
\end{align*}
Notice that $\rho_j(S_R\cup S_H)$ is an algebraic integer. Then $3\rho_j(S_R\cup S_H)$ is an integer if and only if $\rho_j(S_R\cup S_H)$ is an integer for $0\leq j\leq 2n-1$, i.e., $S_R\cup S_H\subseteq\langle a \rangle$ is an integral set. The required result follows from Lemma \ref{integralU6n-lemma4} and Theorem \ref{integralU6n-proposition1}.
\qed\end{proof}

\begin{cor}\label{integralU6n-cor3}
Let $S=S_1\cup S_2$ with $S_1=\{a^{2r},a^{2r}b,a^{2r}b^2\mid 1\leq r\leq n-1\}$ and $S_2=\{a^{2h+1},a^{2h+1}b,$ $a^{2h+1}b^2\mid 0\leq h\leq n-1\}$, where $n>1$ is an integer. Then $\Cay(U_{6n},S)$ is a connected integral graph whose spectrum is $\{[-3]^{2n-1},[0]^{4n}, 6n-3\}$.
\end{cor}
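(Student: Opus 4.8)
The plan is to realize this corollary as a direct application of Theorem \ref{integralU6n-theorem5}, taking $R=\{1,\ldots,n-1\}$ and $H=\{0,1,\ldots,n-1\}$, and then to read off the spectrum from Equation (\ref{integralU6n-equation2}). First I would record that $S$ has the shape required by Theorem \ref{integralU6n-theorem5}: since $a^{2r}b^2=a^{-2(n-r)}b^2$ and $n-r$ ranges over $\{1,\ldots,n-1\}$ as $r$ does, the set $S_1$ written in the corollary coincides with the set $\{a^{2r},a^{2r}b,a^{-2r}b^2\mid r\in R\}$ of the theorem. Using Lemma \ref{integralU6n-lemma3} one checks $1\notin S=S^{-1}$, and since $a=a^{2\cdot 0+1}$ and $ab=a^{2\cdot 0+1}b$ both lie in $S_2$, we get $b=a^{-1}(ab)\in\langle S\rangle$, so $S$ generates $U_{6n}$ and the graph is connected.

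Next I would verify the two hypotheses of Theorem \ref{integralU6n-theorem5}. For condition (1), note that $S_R=\{a^{2r}\mid 1\leq r\leq n-1\}$ consists of all nontrivial even powers of $a$ while $S_H=\{a^{2h+1}\mid 0\leq h\leq n-1\}$ consists of all odd powers, so $S_R\cup S_H=\langle a\rangle\setminus\{1\}$. This is the complement of the trivial subgroup inside $\langle a\rangle$, hence lies in $B(\langle a\rangle)$. For condition (2), I would compute $\psi_k(S_2^2)$. The key point is that any product of two elements of $S_2$ is again an even power of $a$ times a power of $b$: writing $s=a^{2h+1}b^i$ and $t=a^{2h'+1}b^{i'}$ and using $b^i a^{2h'+1}=a^{2h'+1}b^{-i}$ (from $a^{-1}ba=b^{-1}$), one gets $st=a^{2(h+h'+1)}b^{\,i'-i}$. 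For fixed $h,h'$ the exponent $i'-i$ runs equally often through $0,1,2$ modulo $3$ as $(i,i')$ ranges over $\{0,1,2\}^2$, and by Lemma \ref{integralU6n-lemma2} the triple sum $\psi_k(a^{2m})+\psi_k(a^{2m}b)+\psi_k(a^{2m}b^2)=2\omega^{2km}-\omega^{2km}-\omega^{2km}=0$. Hence $\psi_k(S_2^2)=0$ and $2\psi_k(S_2^2)=0$ is a perfect square, so Theorem \ref{integralU6n-theorem5} yields integrality.

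Finally, for the spectrum I would apply Equation (\ref{integralU6n-equation2}). The linear eigenvalues are $\lambda_j=\chi_j(S)=\chi_j(S_1)+\chi_j(S_2)$; using Lemma \ref{integralU6n-lemma6} (and that each conjugacy class contributes one character value) one finds $\chi_j(S_1)=3\sum_{r=1}^{n-1}\omega^{2jr}$ and $\chi_j(S_2)=3\omega^j\sum_{h=0}^{n-1}\omega^{2jh}$, which evaluate to $\chi_0(S)=6n-3$ and $\chi_j(S)=-3$ for $1\leq j\leq 2n-1$. For the degree-two characters, the same equidistribution argument applied to $S_1$ (where $b$ now commutes with the even power $a^{2r'}$, so $st=a^{2(r+r')}b^{\,i+i'}$) gives $\psi_k(S_1)=0$ and $\psi_k(S_1^2)=0$; together with $\psi_k(S_2^2)=0$ this forces $\mu_{k1}+\mu_{k2}=0$ and $\mu_{k1}^2+\mu_{k2}^2=0$, so $\mu_{k1}=\mu_{k2}=0$ for every $k$. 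Collecting multiplicities ($6n-3$ once and $-3$ with multiplicity $2n-1$ from the linear characters, and $0$ with multiplicity $4n$ from the $n$ degree-two characters) gives total $6n$ and the claimed spectrum $\{[-3]^{2n-1},[0]^{4n},6n-3\}$.

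The main obstacle, such as it is, is purely the bookkeeping of the character sums $\psi_k(S_1^2)$ and $\psi_k(S_2^2)$: one must handle the noncommutativity $ba^{2h+1}=a^{2h+1}b^{-1}$ correctly when multiplying out $S_2^2$, and then recognize the repeated mechanism that makes everything collapse, namely that summing $\psi_k$ over a complete triple $\{a^{2m},a^{2m}b,a^{2m}b^2\}$ gives $0$. Once this vanishing is identified, both the verification of condition (2) and the computation of the degree-two eigenvalues become immediate.
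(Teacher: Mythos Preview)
Your proposal is correct and follows essentially the same route as the paper: apply Theorem \ref{integralU6n-theorem5} with $R=\{1,\ldots,n-1\}$ and $H=\{0,1,\ldots,n-1\}$, observe $S_R\cup S_H=\langle a\rangle\setminus\{1\}\in B(\langle a\rangle)$ and $\psi_k(S_2^2)=0$, and read off the spectrum from Equations (\ref{integralU6n-equation2})--(\ref{integralU6n-equation3}). You supply more detail than the paper (which simply asserts ``by calculation'' for $\psi_k(S_2^2)=0$ and cites the equations for the spectrum), including the useful remark that the $S_1$ in the corollary matches the shape required by Theorem \ref{integralU6n-theorem5} because $\{a^{2r}b^2\mid 1\le r\le n-1\}=\{a^{-2r}b^2\mid 1\le r\le n-1\}$.
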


\begin{proof}
It is evident that $S=S^{-1}$ generates $U_{6n}$, so $\Cay(U_{6n},S)$ is connected.
Set $S_R=S_R^{-1}=\{a^{2r}\mid 1\leq r\leq n-1\}=\langle a^2\rangle\setminus\{1\}$ and $S_H=\{a^{2h+1} \mid 0\leq h\leq n-1\}$. Then $S_R\cup S_H=\langle a \rangle \setminus\{1\}\in B(\langle a \rangle)$.
By calculation, $2\psi_k(S_2^2)=0$ is a perfect square for $0\leq k\leq n-1$. By Theorem \ref{integralU6n-theorem5}, $\Cay(U_{6n},S)$ is integral. By Equations (\ref{integralU6n-equation2}) and (\ref{integralU6n-equation3}), the spectrum of $\Cay(U_{6n}, S)$ is $\{[-3]^{2n-1},[0]^{4n}, 6n-3\}$.
\end{proof}

\medskip

\begin{remark}
\em{
Note that $\Cay(U_{6n},S)$ in Corollary \ref{integralU6n-cor3} is a strongly regular graph with parameters $(6n,6n-3,6n-6,6n-3)$. This constructs an infinite family of integral strongly regular graphs.
}
\end{remark}


\section{Conclusion}
In this paper, we first give a necessary and sufficient condition for integrality of Cayley graphs $\Cay(U_{6n}, S)$. And then we give another version expressed by the irreducible characters of cyclic groups. By this description, we get relationships between integrality of $\Cay(U_{6n}, S)$ and the Boolean algebra of cyclic groups. As applications, we construct some infinite families of integral Cayley graphs over $U_{6n}$. At the end of this paper, we would like to propose the following problem:
\begin{problem}
Characterize integral Cayley graphs over other non-abelian groups, such as metacyclic groups and nilpotent groups.
\end{problem}
%
%
%
%



\begin{thebibliography}{99}
\setlength{\parskip}{2pt}

\small{

\bibitem{AbdollahiJ2014}
A. Abdollahi, M. Jazaeri, Groups all of whose undirected Cayley graphs are integral, European J. Combin. 38 (2014) 102--109.

\bibitem{AbdollahiV2009}
A. Abdollahi, E. Vatandoost, Which Cayley graphs are integral?, Electron. J. Combin. 16 (2009) \#R122.

\bibitem{AlperinP2012}
R.C. Alperin, B.L. Peterson, Integral sets and Cayley graphs of finite groups, Electron. J. Combin. 19 (2012) \#P44.



\bibitem{AssariS2014}
A. Assari, F. Sheikhmiri, Normal edge-transitive Cayley graphs of the group $U_{6n}$, Int. J. Comb. (2014) 628214. http://dx.doi.org/10.1155/2014/628214.

\bibitem{Babai1979}
L. Babai, Spectra of Cayley graphs, J. Combin. Theory Ser. B 27 (2) (1979) 180--189.

\bibitem{BalinskaCRSS2002}
K. Bali{\'n}ska, D. Cvetkovi{\'c}, Z. Radosavljevi{\'c}, S. Simi{\'c}, D. Stevanovi{\'c}, A survey on integral graphs, univ. Beograd. Publ. Elektrotehn. Fak. Ser. Mat. 13 (2002) 42--65.

\bibitem{BehajainaL2022}
A. Behajaina, F. Legrand, On Cayley graphs over generalized dicyclic groups, Linear Algebra Appl. 642 (2022) 264--284.

\bibitem{BussemakerC1976}
F.C. Bussemaker, D.M. Cvetkovi{\'c}, There are exactly 13 connected, cubic, integral graphs, Univ. Beograd. Publ. Elektroehn. Fak. Ser. Mat. Fiz. 544 (1976) 43--48.

\bibitem{ChengFH2019}
T. Cheng, L. Feng, H. Huang, Integral Cayley graphs over dicyclic group, Linear Algebra Appl. 566 (2019) 121--137.

\bibitem{ChengFL2022}
T. Cheng, L. Feng, W. Liu, Integral Cayley graphs over a certain nonabelian group, Linear Multilinear Algebra 70 (21) (2022) 7224--7235.

\bibitem{chris1}
M. Christandl, N. Datta, A. Ekert, A. Landahl, Perfect state transfer in quantum spin networks, Phys. Rev. Lett. 92 (18) (2004) 187902.


\bibitem{ChristandlDDEKL2005}
M. Christandl, N. Datta, T. C. Dorlas, A. Ekert, A. Kay, A. J. Landahl, Perfect transfer of arbitrary states in quantum spin networks, Phys. Rev. A 71 (2005) 032312.





\bibitem{DarafshehY2017}
M. R. Darafsheh, M. Yaghoobian, Tetravalent normal edge-transitive Cayley graphs on a certain group of order $6n$, Turkish J. Math. 41 (5) (2017) 1354--1359.

\bibitem{DarafshehY2018}
M. R. Darafsheh, M. Yaghoobian, Cayley graphs of a group of order $6n$, Ars Combin. 136 (2018) 365--382.

\bibitem{DeVosKMA2013}
M. DeVos, R. Krakovski, B. Mohar, A.S. Ahmady, Integral Cayley multigraphs over Abelian and Hamiltonian groups, Electron. J. Combin. 20 (2013) \#P63.


\bibitem{Dutta2023}  S. Dutta, Quantum routing in planar graph using perfect state transfer, Quantum Inf. Process. 22 (10) (2023) 383.

\bibitem{EstelyiK2014}
I. Est{\'e}lyi, I. Kov{\'a}cs, On groups all of whose undirected Cayley graphs of bounded valency are integral, Electron. J. Combin. 21 (4) (2014) \#P4.45.

\bibitem{Ghasemi2017}
M. Ghasemi, Integral pentavalent Cayley graphs on abelian or dihedral groups, Proc. Indian Acad. Sci. Math. Sci. 127 (2) (2017) 219--224.

\bibitem{HararyS1974}
F. Harary, A.J. Schwenk, Which graphs have integral spectra?, Graphs Combin. 406 (1974) 45--51.

\bibitem{HuangL2021}
J. Huang, S. Li, Integral and distance integral Cayley graphs over generalized dihedral groups, J. Algebraic Combin. 53 (2021) 921--943.

\bibitem{JamesL01}
G. James, M. Liebeck, Representations and Characters of Groups, second edition, Cambridge University Press, New York, 2001.


\bibitem{Kay2010}  A. Kay, Perfect, efficient, state transfer and its application as a constructive tool,
Int. J. Quantum Inf. 8 (4) (2010) 641--676.

\bibitem{KlotzS2010}
W. Klotz, T. Sander, Integral Cayley graphs over abelian groups, Electron. J. Comb. 17 (2010) \#R81.

\bibitem{KlotzS2011}
W. Klotz, T. Sander, Integral Cayley graphs defined by greatest common divisors, Electron. J. Comb. 18 (2011) \#P94.

\bibitem{KuLW2014}
C.Y. Ku, T. Lau, K.B. Wong, Cayley graph on symmetric group generated by elements fixing $k$ points, Linear Algebra Appl. 471 (2014) 405--426.

\bibitem{LiuZ2022}
X. Liu, S. Zhou, Eigenvalues of Cayley graphs, Electron. J. Combin. 29 (2) (2022) \#P2.9.

\bibitem{LuHH2018}
L. Lu, Q. Huang, X. Huang, Integral Cayley graphs over dihedral groups, J. Algebraic Comb. 47 (2018) 585--601.

\bibitem{MaW2016}
X. Ma, K. Wang, On finite groups all of whose cubic Cayley graphs are integral, J. Algebra Appl. 15 (6) (2016) 1650105.

\bibitem{So2006}
W. So, Integral circulant graphs, Discrete Math. 306 (1) (2006) 153--158.

\bibitem{Steinberg12}
B. Steinberg, Representation Theory of Finite Groups, Universitext, Springer, New York, 2012.


\bibitem{Wang2005}
L. Wang, Integral Trees and Integral Graphs, PhD thesis, University of Twente,
2005.


\bibitem{WatanabeS1979}
M. Watanabe, A.J. Schwenk, Integral starlike trees, J. Austral Math. Soc. Ser. A. 28 (1979) 120--128.




}

\end{thebibliography}
\end{document}